\newtheorem{thm}{Theorem}[section]
\newtheorem{lem}[thm]{Lemma}
\newtheorem{prop}[thm]{Proposition}
\newtheorem{criterion}[thm]{Criterion}
\newtheorem{subcriterion}[thm]{Sub-criterion}
\theoremstyle{definition}
\newtheorem{defn}[thm]{Definition}
\newtheorem{example}[thm]{Example}
\theoremstyle{remark}
\newtheorem{rem}[thm]{Remark}
\newtheorem{Question}[thm]{Question}
\numberwithin{equation}{section}
\begin{document}

\title[]
{New criteria and Constructions of Brunnian Links}

\author{Sheng Bai}
\address{School of Mathematical Sciences, Peking University}
\email{barries@163.com}

\author{Weibiao Wang}
\address{School of Mathematical Sciences, Peking University}
\email{wwb@pku.edu.cn}

\subjclass{}

\keywords{Brunnian link, Borromean link, split link, detecting unlink, HOMFLY polynomial}

\begin{abstract}
  We present two practical and widely applicable methods, including some criteria and a general procedure, for detecting Brunnian property of a link, if each component is known to be unknot. The methods are based on observation and handwork. They are used successfully for all Brunnian links known so far. Typical examples and extensive experiments illustrate their efficiency. As an application, infinite families of Brunnian links are created, and we establish a general way to construct new ones in bulk.
\end{abstract}

\date{2020 June. 16}
\maketitle

\tableofcontents

\section{Introduction}\label{sect:introduction}

Detecting unlink is generally considered as difficult at least as detecting unknot, which is a famous problem in knot theory. The most common methods are link invariants. These rich and powerful tools, though, often become hard to compute by hand when links are large. A question that has always attracted the authors is:

\begin{Question}
	Can we establish an intuitive and rigorous method to detect unlinks, under some assumptions, such as some components are known to be unknotted?
\end{Question}

It is still unknown whether HOMFLY polynomial detects unlink (also whether Jones polynomial detects unknot, c.f. \cite{EKT}). If one expects a negative answer and tries to seek for a counterexample, one will need to prove the desired link is nontrivial. At this point, an intuitive method might be more enlightening than looking for other invariants.

We are concerned in this paper with Brunnian links.

\begin{defn}\label{def:brunn}
	A link $L$ of $n$ $(>1)$ components in $\mathbb{R}^3$  is \emph{Brunnian} if it satisfies the following properties:
	\begin{enumerate}
		\item[(ST):] every sublink with $(n-1)$ components is trivial;
		\item[(NT):] $L$ itself is nontrivial.
	\end{enumerate}
\end{defn}

Analyzing a link, we may naturally consider its nontrivial sublinks, the minimal ones among whom are exactly knots and Brunnian links. In this sense, they are of natural importance in study of links. For theoretic results of Brunnian links the reader is referred to \cite{MS,MY,SH}. The first examples were introduced by H. Brunn\cite{B} in 1892 but it was not until 1961 their Brunnian property was proved by DeBrunner\cite{HDb}. Since then, various authors constructed many kinds of examples, which, as far as we know, all appear in \cite{D.Rolfsen, BFJRVZ, BS, BCS, L, J, F, M, MM, TKa}. Recently, Nils A. Baas et al.\cite{B10, B12, BC, BCS, BS} constructed several infinite families of Brunnian links, vastly expanding our knowledge in this area. However, in these papers, their Brunnian property is verified by general invariants, not taking advantage of the Brunnian property and often difficult in handwork.

The purpose of this paper is to give convenient and general approach to detecting Brunnian property of links. We focus on the following question:

\begin{Question}
	Suppose a link $L$ possesses the property (ST), how can we check the property (NT)?
\end{Question}

In fact, if all component of a link $L$ are known to be unknots, we can detect its Brunnian property, by applying the method for this question to all sublinks with $k$ components and induction on $k$.

We propose two methods, named Arc-method and Circle-method, including several criteria and a general procedure. The novel methods, significantly different from using invariants, depend crucially on observation and are worked by hand. They are practical and turn out to work well for all known Brunnian links. The efficiency will be demonstrated by typical examples. Arc-method is very fast when it works. The power of Circle-method will be reflected in large series of links and the flexibility of Circle-method leads us to construct new Brunnian links. We will exhibit infinitely many infinite families of new Brunnian links, far more than the existing ones.

\begin{figure}[htbp]
	\centering
	\ifpdf
	\setlength{\unitlength}{1bp}%
	\begin{picture}(186.30, 81.46)(0,0)
	\put(0,0){\includegraphics{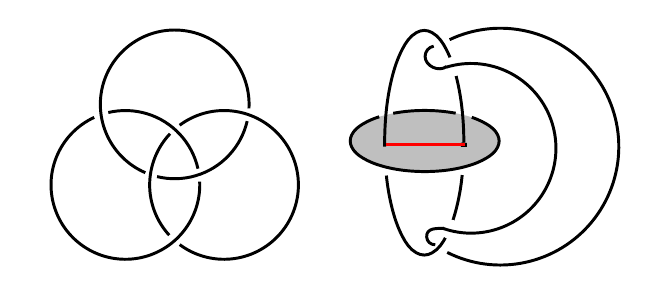}}
	\put(5.67,11.98){\fontsize{8.03}{9.64}\selectfont $C_2$}
	\put(84.32,11.98){\fontsize{8.03}{9.64}\selectfont $C_3$}
	\put(23.27,69.52){\fontsize{8.03}{9.64}\selectfont $C_1$}
	\put(97.09,49.05){\fontsize{8.03}{9.64}\selectfont $C_1$}
	\put(104.09,11.98){\fontsize{8.03}{9.64}\selectfont $C_2$}
	\put(171.29,11.98){\fontsize{8.03}{9.64}\selectfont $C_3$}
	\end{picture}%
	\else
	\setlength{\unitlength}{1bp}%
	\begin{picture}(186.30, 81.46)(0,0)
	\put(0,0){\includegraphics{borromean}}
	\put(5.67,11.98){\fontsize{8.03}{9.64}\selectfont $C_2$}
	\put(84.32,11.98){\fontsize{8.03}{9.64}\selectfont $C_3$}
	\put(23.27,69.52){\fontsize{8.03}{9.64}\selectfont $C_1$}
	\put(97.09,49.05){\fontsize{8.03}{9.64}\selectfont $C_1$}
	\put(104.09,11.98){\fontsize{8.03}{9.64}\selectfont $C_2$}
	\put(171.29,11.98){\fontsize{8.03}{9.64}\selectfont $C_3$}
	\end{picture}%
	\fi
	\caption{\label{fig:borromean}%
		Borromean ring}
\end{figure}

Let us illustrate our natural idea with an example. The two links in Fig. \ref{fig:borromean} are both Borromean rings. We use the right one. Take the gray disk $D$ bounded by component $C_1$, which intersects $C_2$ at two points. Connect these two points by an arc on $D$. Each component of $C_2- D$ together with the red arc forms a circle which can not bound a disk avoiding $C_3$ due to linking number. Arc-criterion then asserts Borromean ring is nontrivial.

\section{Preliminaries}\label{sect:preliminaries}

We work in smooth category. Without special explanation, we always consider links in $\mathbb{R}^3$. The notations $C,L,D$ (maybe with subscripts) are used always to denote an unknot, a link and an embedded disk respectively. The interior of a manifold is denoted by ${\rm Int}(\cdot)$. Each arc is assumed to be simple, and intersections are always assumed to be compact and transverse.

The following proposition is of fundamental importance for us, which follows immediately from Lemma \ref{lem:basic}.

\begin{prop}\label{prop:basic}
	Let $L$ be a link satisfying property (ST) and $L_1$ be any proper sublink of $L$. Then $L$ satisfies (NT) if and only if $L_1$ can not bound mutually disjoint disks in the complement of $L-L_1$.
\end{prop}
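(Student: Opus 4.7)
The plan is to reformulate the biconditional contrapositively: $L$ is the unlink if and only if $L_1$ bounds mutually disjoint disks in the complement of $L-L_1$. Both directions then become statements about standard position of links relative to spanning disks.

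The forward direction is essentially immediate. If $L$ is trivial, choose a system of mutually disjoint embedded disks $D_1,\dots,D_n$ in $\mathbb{R}^3$ with $\partial D_i$ the $i$-th component of $L$. The subfamily whose boundaries constitute $L_1$ consists of disjoint disks, each automatically disjoint from $L-L_1$ because the full family is disjoint. This witnesses the desired disk system for $L_1$.

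For the reverse direction, suppose $L_1$ bounds mutually disjoint disks $D_1,\dots,D_k$ with $\bigl(\bigcup_i D_i\bigr)\cap (L-L_1)=\emptyset$. The key step is a splitting argument: take a tubular neighborhood $N$ of $\bigcup_i D_i$ chosen thin enough to stay disjoint from $L-L_1$ (possible because the intersection is empty and $L-L_1$ is compact), and use it to build an ambient isotopy, supported in $N$ and hence fixing $L-L_1$ pointwise, that shrinks $L_1$ into a tiny standard unlink contained in a ball $B\subset\mathbb{R}^3\setminus(L-L_1)$. This realizes $L$ as the split union of an unlink and the sublink $L-L_1$; this is precisely what Lemma \ref{lem:basic} is expected to supply in one step. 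To finish, I would invoke (ST): every $(n-1)$-component sublink of $L$ is trivial, and since every sublink of an unlink is an unlink, every proper sublink of $L$ is trivial. In particular $L-L_1$ is trivial, so $L$ is the split union of two unlinks, hence itself trivial, contradicting (NT).

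The one delicate point, and the step that Lemma \ref{lem:basic} presumably discharges, is verifying the ambient isotopy: one must know that disjoint spanning disks for $L_1$ whose interiors avoid $L-L_1$ genuinely exhibit $L_1$ as splittable from $L-L_1$. Granted that lemma, the rest of the argument reduces to a bookkeeping statement about (ST) and the standard fact that split unions of unlinks are unlinks, so no further obstacle arises.
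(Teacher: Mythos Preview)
Your argument is correct and self-contained; the shrinking-along-disks isotopy, the invocation of (ST) to trivialize $L-L_1$, and the split-union conclusion all go through as written.

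One clarification on how the paper organizes things: Lemma~\ref{lem:basic} is not the ambient-isotopy step you flag as delicate, but rather the full statement of the Proposition in the special case $L_1=C_1$ a single component. Its proof is exactly the argument you give (shrink the disk, move it far away, apply (ST) to the remaining $n-1$ components, move it back). The Proposition then follows \emph{immediately} by a cheaper observation than yours: if $L_1$ bounds disjoint disks avoiding $L-L_1$, pick any one component $C\subset L_1$; its disk already avoids all of $L-C$, so Lemma~\ref{lem:basic} applies directly and $L$ is trivial. You re-prove the lemma in its multi-component form rather than reduce to it, which is fine but slightly less economical.
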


\begin{lem}\label{lem:basic}
	Let $L=\cup _{i=1}^n C_i$ be an $n$-component link satisfying (ST). Then $L$ is trivial if and only if $C_1$ bounds a disk in the complement of $L-C_1$.
\end{lem}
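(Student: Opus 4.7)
The plan is to prove both implications. The forward direction is immediate from the definition of an unlink: if $L$ is the trivial $n$-component link, its components bound pairwise disjoint embedded disks in $\mathbb{R}^3$, and in particular the disk bounded by $C_1$ is disjoint from $L-C_1$.

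For the converse, suppose $C_1$ bounds a disk $D_1$ with $D_1 \cap (L-C_1) = \emptyset$. By (ST), the sublink $L-C_1$ is trivial. My plan is to use $D_1$ to ambient-isotope $C_1$ to a small unknotted circle far from $L-C_1$, thereby exhibiting $L$ as a split union of an unknot and the trivial link $L-C_1$, hence as an unlink. This avoids the bookkeeping of an innermost-disk/outermost-arc resolution of $D_1$ against any choice of disks for $L-C_1$.

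Concretely, because $D_1$ and $L-C_1$ are disjoint compact sets, there is a closed tubular neighborhood $N$ of $D_1$ in $\mathbb{R}^3$ with $N \cap (L-C_1) = \emptyset$. Since $D_1$ is an embedded disk, $N$ is a $3$-ball and $D_1 \subset N$ is unknotted within this ball. Hence one can construct an isotopy of $N$ fixing $\partial N$ pointwise that shrinks $D_1$ into an arbitrarily small disk $D_1'$ contained in a tiny $3$-ball $B \subset {\rm Int}(N)$. Extending by the identity outside $N$, I obtain an ambient isotopy $\Phi_t$ of $\mathbb{R}^3$ that fixes $L-C_1$ pointwise and carries $C_1 = \partial D_1$ to $C_1' = \partial D_1' \subset B$.

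Then $\Phi_1(L) = C_1' \cup (L-C_1)$ is a split link, with $C_1'$ confined to the ball $B$ and $L-C_1$ lying in $\mathbb{R}^3 \setminus B$. Since $C_1'$ is a small round unknot and $L-C_1$ is the trivial $(n-1)$-component link, $\Phi_1(L)$ is the trivial $n$-component link, and hence so is $L$. I do not expect a serious obstacle: the two ingredients needed are the isotopy extension theorem and the standard fact that a disk in a $3$-ball is ambient-isotopic to a small standard disk; everything else is packaging.
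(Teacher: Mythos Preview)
Your proof is correct and follows essentially the same approach as the paper: shrink $C_1$ along the disk $D_1$ via an ambient isotopy supported in a neighborhood of $D_1$ (hence fixing $L-C_1$), so that the image of $L$ is a split union of a small unknot and the trivial link $L-C_1$. The paper adds one cosmetic step---moving $D_1$ back to its original position via isotopy extension in the complement of $L-C_1$, thereby exhibiting disjoint disks for $L$ itself---but this is unnecessary once you invoke that triviality is an isotopy invariant, as you do.
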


\begin{proof}
	The ``only if" part is obvious. For the converse, let $D_1$ be a closed disk with $\partial D_1=C_1$ such that ${\rm Int}(D_1)\cap L=\emptyset$. Retract $D_1$ to be small enough, and move it far away by isotopy. By (ST), $C_i, i=2,...,n$ bounds disjoint disks, and those disks can be assumed to be far from the now $D_1$.  We can then move $D_1$ to the original position, while using isotopy extension to $\mathbb{R}^3 \backslash \cup_{i=2}^n C_i$. So $C_i, i=1,...,n$ bounds mutually disjoint disks and by definition $L$ is trivial.
\end{proof}

We now describe our basic tool and notions. Suppose $L=\cup_{i=1}^n C_i$ ($n>1$) is an $n$-component link with property (ST). As a tool, choose a proper subset $I \subset \{1,2,\cdots,n\}$ and take disjoint disks $D_i (i\in I)$ bounded by $C_i (i\in I)$ with $C_i =\partial D_i$. Then $L$ is divided into three disjoint sublinks, $L_I \triangleq \cup_{i \in I} C_i$, $L_J \triangleq \cup_{j \in J} C_j$, and $L_0  \triangleq  L- L_I -L_J$, where $L_J$ consists of the components intersecting $\cup_{i \in I} {\rm Int}(D_i)$, and $L_0$ may be empty. If $L_J =\emptyset$, by Proposition \ref{prop:basic}, $L$ is trivial. So we always assume $L_J \neq\emptyset$.

{\sc test disk and thicken disk}: For each $i\in I$, we call $D_i$ a \emph{test disk}. Now $\cup_{i \in I} D_i$ intersects $L_J$ with finitely many points $\{ p_t \}_{t\in T}$. Take an embedded cylinder $\mathbf{D_i} \cong D \times [0,1]$ such that $D \times \{\frac{1}{2}\} =D_i$, and $\mathbf{D_i}$ intersects $L_J$ with several disjoint arcs each of which takes one $p_t$ as interior point (see Fig. \ref{fig:testcomplex}). Choose $\mathbf{D_i} (i\in I)$ to be ``thin'' enough so that they are mutually disjoint. We call $\mathbf{D_i}$ a \emph{thicken disk} of $D_i$. It has two sides:
\begin{align}
   D_i^+ \triangleq D \times \{ 1 \}; D_i^- \triangleq D \times \{ 0 \}. \nonumber
\end{align}

{\sc test complex and test subcomplex}: We call $U  \triangleq  \cup_{i \in I} \mathbf{D_i} \cup L$ a \emph{test complex}. A subset $U'\subset U$ is called a \emph{test subcomplex of $U$} if it consists of some thicken disks and a closed subset of $L$.

\section{Arc-method}\label{sect:arc}

\subsection{Arc-Criterion}\label{subsect:criterionA}\

Let $L$ be a link satisfying (ST) and divided into sublinks $L_I$, $L_J$, and $L_0$ with respect to test disks $\{D_i\}_{i\in I}$ as in Preliminaries. Notice that for any $j \in J$, $C_j-\cup_{i \in I}{\rm Int}( \mathbf{D_i})$ is a disjoint union of arcs.

\begin{defn}\label{def:ear}
	If an arc component $e$ of $C_j-\cup_{i \in I}{\rm Int}( \mathbf{D_i})$ has two endpoints both on $D_i^\sigma$ ($\sigma$ can be + or $-$), then $e$ is an \emph{ear} of $C_j$ on $D_i^\sigma$.
\end{defn}

For an ear $e$ of $C_j$ on $D_i^\sigma$ and a test subcomplex $U'$ containing $e$, if there exists an arc $\alpha \subset D_i^\sigma$ (avoiding $L$) connecting the endpoints of $e$, such that $e \cup \alpha$ bounds a disk $D_e$ with ${\rm Int}(D_e)\subset \mathbb{R}^3-U'$, then we say $e$ is \emph{compressible for $U'$} and call $D_e$ a \emph{compressing disk of $e$ for $U'$}.
When $U'=L$, the arc $\alpha$ is called an \emph{incredible arc of $e$}.

\begin{criterion}\label{criterionA}(\textbf{Arc-criterion})
	Suppose an $n$-component link $L=\cup_{i=1}^n C_i$ satisfies (ST) and is divided into sublinks $L_I$, $L_J$, and $L_0$ with respect to test disks $\{D_i\}_{i\in I}$ as in Preliminaries. If for some $j \in J$, there is at most one ear of $C_j$ compressible for $L$, then $L$ is Brunnian.
\end{criterion}

The criterion is a direct conclusion of the following proposition. We call it Arc-criterion for it involves incredible arcs to detect Brunnian property, and Circle-criterion is named for incredible circles (see Section \ref{sect:circle}).

\begin{prop}\label{prop:A}
	Suppose an $n$-component link $L=\cup_{i=1}^n C_i$ satisfies (ST) and is divided into sublinks $L_I$, $L_J$, and $L_0$ with respect to test disks $\{D_i\}_{i\in I}$ as in Preliminaries. If $L$ is trivial, then for each $j \in J$, there are at least 2 ears of $C_j$ compressible for $L$.
\end{prop}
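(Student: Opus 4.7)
The plan is to extract the compressible ears directly from a spanning disk of $C_j$. Since $L$ is trivial, Lemma \ref{lem:basic} furnishes an embedded disk $\Delta_j$ with $\partial\Delta_j=C_j$ and $\text{Int}(\Delta_j)\cap L=\emptyset$. Because each $C_i$ ($i\in I$) is disjoint from the compact disk $\Delta_j$, I would first shrink the thicken disks $\mathbf{D}_i$ enough that $\partial D_i^\sigma\cap\Delta_j=\emptyset$ for every $i,\sigma$. Placing $\Delta_j$ in general position with $\bigcup_{i,\sigma}D_i^\sigma$, the intersection $\Delta_j\cap\bigcup_{i,\sigma}D_i^\sigma$ will then be a disjoint union of proper arcs $\alpha_1,\ldots,\alpha_k$ (with endpoints on $C_j\cap\bigcup_{i,\sigma}D_i^\sigma$) together with possibly some simple closed curves lying in $\text{Int}(\Delta_j)$.

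Since $j\in J$, $C_j$ meets $\text{Int}(D_i)$ for some $i$ and hence meets $D_i^\sigma$, so $k\ge 1$. The arcs partition $\Delta_j$ into $k+1$ sub-disks, and the standard dual-tree argument supplies at least two \emph{outermost} ones, each bounded by exactly one $\alpha_m\subset D_i^\sigma$ together with one sub-arc $e_m$ of $\partial\Delta_j=C_j$ between the endpoints of $\alpha_m$. For such an outermost sub-disk $R$ with $\partial R=\alpha\cup e$ and $\alpha\subset D_i^\sigma$, the endpoints of $e$ coincide with those of $\alpha$ and thus lie on $D_i^\sigma$; and the interior of $e$ contains no further crossing of $C_j$ with any $D_{i'}^{\sigma'}$, since any such crossing would be the endpoint of another intersection arc attached to $R$'s boundary, violating outermost-ness. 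This forces $e$ never to enter $\text{Int}(\mathbf{D}_{i'})$ for any $i'$, so $e$ is a single external arc of $C_j-\bigcup_i\text{Int}(\mathbf{D}_i)$ with both endpoints on $D_i^\sigma$---that is, an ear of $C_j$ on $D_i^\sigma$. Moreover, $\alpha$ provides the required arc on $D_i^\sigma$ joining the endpoints of $e$, and $\text{Int}(R)\subset\text{Int}(\Delta_j)\subset\mathbb{R}^3-L$, so $R$ is a compressing disk for $e$ with respect to $L$. Two distinct outermost sub-disks give distinct $e_m$, yielding at least two compressible ears of $C_j$.

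The hard part will be justifying that $e$ is a single external arc, which combines the outermost condition (no interior crossings of $C_j$ with $\bigcup_{i',\sigma'} D_{i'}^{\sigma'}$) with the geometry of thicken disks (an internal arc has its endpoints on opposite sides of $\mathbf{D}_i$, incompatible with the same-side endpoints of $e$). Closed curves of intersection in $\text{Int}(\Delta_j)\cap D_i^\sigma$, often the trickiest feature of such innermost-disk arguments, cause no trouble here: the outermost sub-disks are defined using only the arcs $\alpha_m$, and their interiors already lie in $\text{Int}(\Delta_j)\subset\mathbb{R}^3-L$ regardless of any closed curves contained within.
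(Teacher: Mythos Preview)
Your argument is correct and follows essentially the same outermost-disk strategy as the paper: obtain a spanning disk for $C_j$ disjoint from the rest of $L$, intersect it with the test disks, and read off at least two compressible ears from the outermost subregions. The only cosmetic difference is that you intersect $\Delta_j$ with the side disks $D_i^\sigma$ rather than with the central disks $D_i$ as the paper does; this makes the verification that the resulting boundary arc $e$ is literally an ear on some $D_i^\sigma$ a bit more direct, at the price of doubling the number of intersection arcs to track.
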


\begin{proof}
	By Proposition \ref{prop:basic}, if $L$ is trivial, then $L_J$ bounds a disjoint union of disks $\sqcup_{j \in J} D_j$ with ${\rm Int}(D_j)\cap L=\emptyset$, for $\forall j\in J$. Fix $j\in J$ and we see that $D_j\cap \cup_{i\in I} D_i$ is a disjoint union of circles and arcs. Since $D_j\cap L_I=\emptyset$, the endpoints of the arcs are all on $C_j$, and all these arcs cut $D_j$ into some regions, each isomorphic to a disk. There are at least two such regions, say $R_1,R_2$, outermost on $D_j$ (i.e., the boundary of $R_s(s=1,2)$ is a union of an arc on $C_j$ and an arc component of $D_j\cap\cup_{i\in I}D_i$, and $\mathrm{Int}(R_s)\cap \cup_{i\in I} C_i=\emptyset$), as shown in Fig.  \ref{fig:region}. The boundaries of these regions provide the ears and their incredible arcs.
\end{proof}

\begin{figure}[htbp]
	\centering
	\ifpdf
	\setlength{\unitlength}{1bp}%
	\begin{picture}(85.33, 85.33)(0,0)
	\put(0,0){\includegraphics{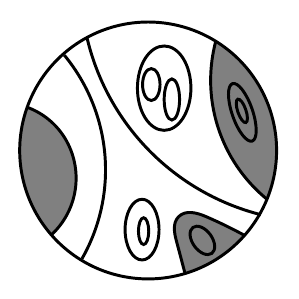}}
	\end{picture}%
	\else
	\setlength{\unitlength}{1bp}%
	\begin{picture}(85.33, 85.33)(0,0)
	\put(0,0){\includegraphics{region}}
	\end{picture}%
	\fi
	\caption{\label{fig:region}%
		Outermost regions on $D_j$}
\end{figure}

The proposition provides a necessary and sufficient condition for $L$ to be trivial. In fact, if an ear $e$ on $D_i^+$ or $D_i^-$ is compressible for $L$ with compressing disk $D_e$, we can push $e$ along $D_e$, across $D_i$ to eliminate two intersection points of $C_j\cap D_i$. It induces an isotopy of $L$, and $L_I,L_J$ may get adjusted with respect to the test disks $D_i,i\in I$. Repeat it inductively. If finally $L_J$ becomes empty, by Proposition \ref{prop:basic} $L$ is trivial, otherwise Criterion \ref{criterionA} tells it is Brunnian. Therefore, given a link consisting of unknots, theoretically we can check every sublink with Arc-criterion inductively and finally decide whether it is Brunnian.

\subsection{Example analysis}\label{subsect:examplesA}\

Suppose a link satisfies (ST). To prove it is Brunnian by Arc-method, we shall take appropriate test disks and then verify the incompressibility of enough ears by investigating all candidates for their incredible arcs. We present some examples here to show the way it works and the efficiency it possesses.

\begin{example}\label{example:communi}
\begin{figure}[htbp]
    \centering
    \includegraphics[height=3.6cm]{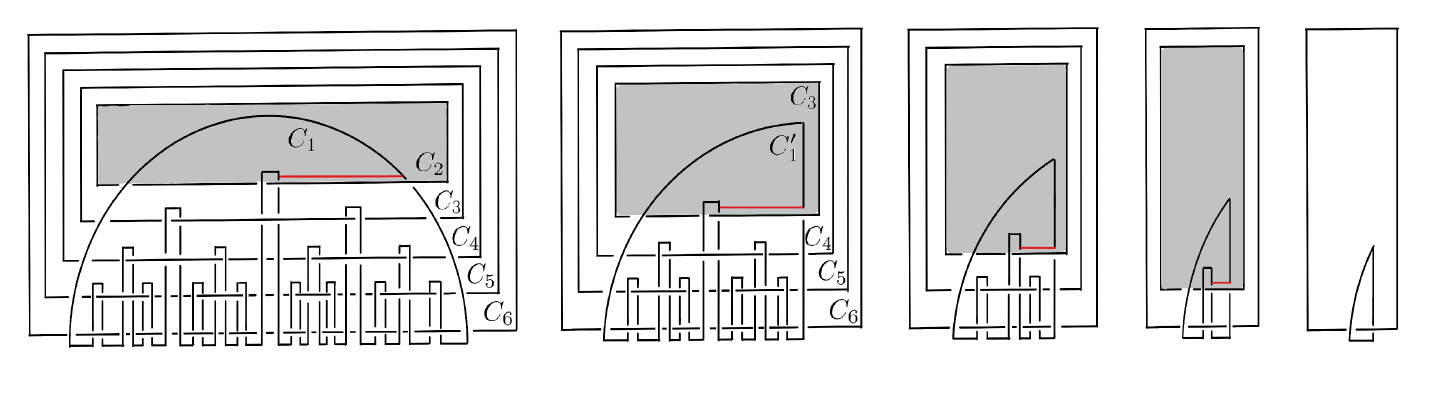}
    \caption{Example \ref{example:communi}}
    \label{fig:communi}
\end{figure}

The first link in Fig. \ref{fig:communi} has 6 components, and it is easy to see that each sublink with 5 components is trivial. Now we apply Arc-criterion to show it is a Brunnian link. Take the test disk colored in gray and $L_I=C_2,L_J=C_1,L_0=\bigcup_{k=3}^6 C_k$. There are only two ears, denoted $e_1,e_2$. The candidate for incredible arc of $e_1$ is unique up to isotopy, colored in red and denoted $\alpha$. By Criterion \ref{criterionA}, we only need to show the circle $C_1'\triangleq e_1\cup\alpha$ can not bound a disk in the complement of $L_0$, and by Proposition \ref{prop:basic}, it is equivalent to that the second link in Fig.  \ref{fig:communi} is Brunnian. Inductively, it suffices to show the rightmost link is Brunnian, which is obvious.
\end{example}

\begin{figure}[htbp]
	\centering
	\ifpdf
	\setlength{\unitlength}{1bp}%
	\begin{picture}(270.67, 91.98)(0,0)
	\put(0,0){\includegraphics{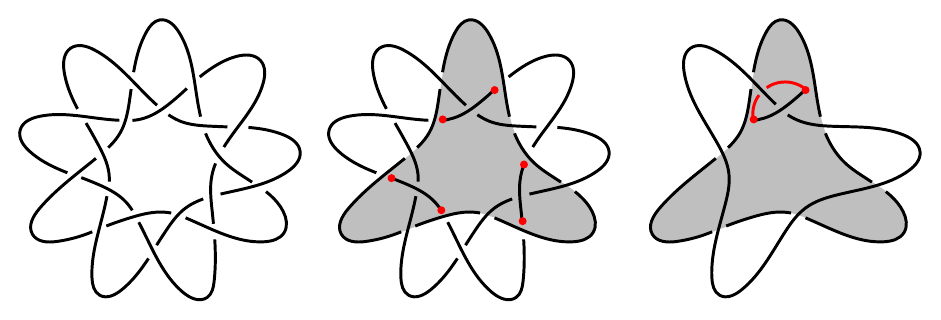}}
	\put(8.12,13.75){\fontsize{8.83}{10.60}\selectfont $C_1$}
	\put(7.96,49.62){\fontsize{8.83}{10.60}\selectfont $C_2$}
	\put(7.44,76.01){\fontsize{8.83}{10.60}\selectfont $C_3$}
	\put(185.47,76.01){\fontsize{8.83}{10.60}\selectfont $C_3$}
	\put(219.17,51.85){\fontsize{8.83}{10.60}\selectfont $e$}
	\put(222.42,69.55){\fontsize{8.83}{10.60}\selectfont \textcolor[rgb]{1, 0, 0}{$\alpha$}}
	\end{picture}%
	\else
	\setlength{\unitlength}{1bp}%
	\begin{picture}(270.67, 91.98)(0,0)
	\put(0,0){\includegraphics{gBr}}
	\put(8.12,13.75){\fontsize{8.83}{10.60}\selectfont $C_1$}
	\put(7.96,49.62){\fontsize{8.83}{10.60}\selectfont $C_2$}
	\put(7.44,76.01){\fontsize{8.83}{10.60}\selectfont $C_3$}
	\put(185.47,76.01){\fontsize{8.83}{10.60}\selectfont $C_3$}
	\put(219.17,51.85){\fontsize{8.83}{10.60}\selectfont $e$}
	\put(222.42,69.55){\fontsize{8.83}{10.60}\selectfont \textcolor[rgb]{1, 0, 0}{$\alpha$}}
	\end{picture}%
	\fi
	\caption{\label{fig:gBr}%
		Generalized Borrommean ring}
\end{figure}
\begin{example}
	(Fig.  \ref{fig:gBr}) Take the gray test disk. By symmetry, we just consider one ear, denoted $e$. Delete all other ears, then up to isotopy, there is only one way, as the arc $\alpha$, to connect the two endpoints of $e$ on the disk. The circle $e\cup\alpha$ has linking number 1 with $C_3$, thus $e$ is incompressible for the link.
\end{example}

For an ear $e$ on $D_i^\sigma$ ($\sigma$ is + or $-$), connect the endpoints by an arc $\alpha$ in $D_i^\sigma$. If $e \cup \alpha$ is a nontrivial knot, we say $e$ is \emph{knotted}, otherwise \emph{unknotted}. It is independent of the choice of $\alpha$. Obviously, a compressible ear must be unknotted.

\begin{example}\label{example:Brunnian braid}
(Fig.  \ref{fig:F})
	Take the gray test disk. There are four ears, denoted $a,b,c,d$. With the same argument as in Example \ref{example:gBr}, $b,d$ are incompressible. And $a$ is knotted as shown on the right, hence incompressible.

\begin{figure}[htbp]
	\centering
	\ifpdf
	\setlength{\unitlength}{1bp}%
	\begin{picture}(316.73, 75.37)(0,0)
	\put(0,0){\includegraphics{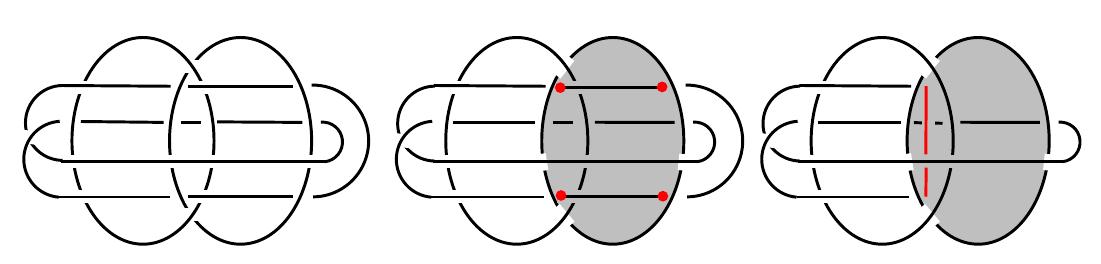}}
	\put(114.83,52.34){\fontsize{9.72}{11.66}\selectfont $a$}
	\put(174.97,52.34){\fontsize{9.72}{11.66}\selectfont $b$}
	\put(174.97,11.00){\fontsize{9.72}{11.66}\selectfont $d$}
	\put(205.03,52.34){\fontsize{9.72}{11.66}\selectfont $c$}
	\put(18.45,62.12){\fontsize{9.72}{11.66}\selectfont $C_1$}
	\put(80.08,62.12){\fontsize{9.72}{11.66}\selectfont $C_2$}
	\put(6.80,9.50){\fontsize{9.72}{11.66}\selectfont $C_3$}
	\put(220.06,52.34){\fontsize{9.72}{11.66}\selectfont $a$}
	\end{picture}%
	\else
	\setlength{\unitlength}{1bp}%
	\begin{picture}(316.73, 75.37)(0,0)
	\put(0,0){\includegraphics{F}}
	\put(114.83,52.34){\fontsize{9.72}{11.66}\selectfont $a$}
	\put(174.97,52.34){\fontsize{9.72}{11.66}\selectfont $b$}
	\put(174.97,11.00){\fontsize{9.72}{11.66}\selectfont $d$}
	\put(205.03,52.34){\fontsize{9.72}{11.66}\selectfont $c$}
	\put(18.45,62.12){\fontsize{9.72}{11.66}\selectfont $C_1$}
	\put(80.08,62.12){\fontsize{9.72}{11.66}\selectfont $C_2$}
	\put(6.80,9.50){\fontsize{9.72}{11.66}\selectfont $C_3$}
	\put(220.06,52.34){\fontsize{9.72}{11.66}\selectfont $a$}
	\end{picture}%
	\fi
	\caption{\label{fig:F}%
		Example \ref{example:Brunnian braid}}
\end{figure}
\end{example}

Example \ref{example:communi} is Exercise 8 of  \cite{D.Rolfsen} Chapter 3 section F, also in \cite{TK}. Exercise 7 of \cite{D.Rolfsen} Chapter 3 section F and Fig.1.10 in \cite{BS} can be proved in the same manner.

Example \ref{example:gBr} is Tait series of Fig.2 in \cite{J}, the simplest one of which is Borremean ring. \cite{J} shows lots of \emph{Borromean links} and its Fig.4, 5 and 6, which generalize Borromean ring in different ways, can be proved to be Brunnian similarly. The first 6 links of Fig.7 and the first 6 links of Fig.8 in \cite{J} are more complicated but Arc-criterion works as well.

Example \ref{example:Brunnian braid} is Fig.5 in \cite{F}, constructed from Brunnian braids.

Compared with original methods in these papers, Arc-method is clearly more concise and faster in verifying Brunnian property.
However, there are also cases, for example Brunn's chain (Fig. \ref{fig:milnorlink}), where Arc-method fails in practice. The obstruction is that there may be infinitely many ways to connect two points on a disk with marked points, see Fig. \ref{fig:auxiliaryarc}. So if there are more than two intersection points on the same side of a test disk, there will be infinitely many candidates for incredible arcs of one ear, and Arc-method may get useless in practice. Deleting some intersection points, as used in Example \ref{example:gBr}, may help but not always. To overcome this limitation, another method, Circle-method, develops.

\begin{figure}[htbp]
	\centering
	\ifpdf
	\setlength{\unitlength}{1bp}%
	\begin{picture}(341.28, 81.37)(0,0)
	\put(0,0){\includegraphics{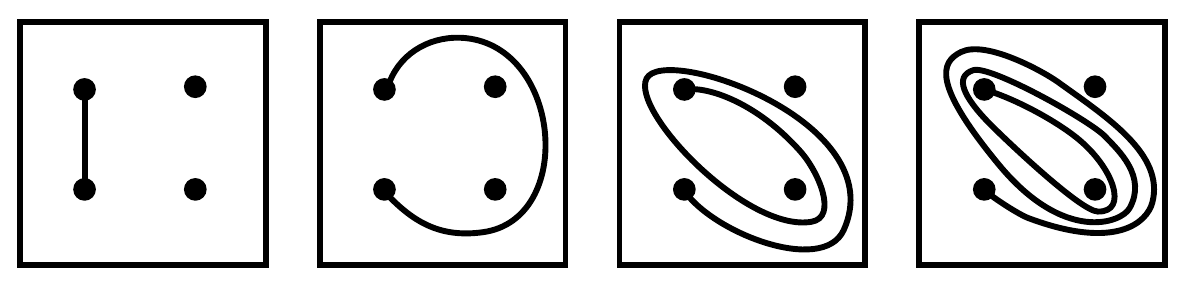}}
	\end{picture}%
	\else
	\setlength{\unitlength}{1bp}%
	\begin{picture}(341.28, 81.37)(0,0)
	\put(0,0){\includegraphics{auxiliaryarc}}
	\end{picture}%
	\fi
	\caption{\label{fig:auxiliaryarc}%
		Different ways to connect two points}
\end{figure}

\section{Circle-method}\label{sect:circle}

\subsection{Quasi-tangles}\label{subsect:terminologyC}\

Suppose an $n$-component link $L=\cup_{i=1}^n C_i$ satisfies (ST) and is divided into sublinks $L_I$, $L_J$, and $L_0$ as in Preliminaries, with respect to test disks $\{D_i\}_{i\in I}$. We have thicken disks $\{\mathbf{D_i}\}_{i\in I}$ and the test complex $U$.

Fix an index $i\in I$ and a test subcomplex $U'$ containing $\mathbf{D_i}$. Recall that $\mathbf{D_i}$ has two sides $D_i^+,D_i^-$, so for each connected component of $U'-\mathbf{D_i}$, say $X$, one of the following four cases holds:
\begin{center}
	Case $+$: $\overline{X}\cap D_i^+\neq\emptyset,\,\overline{X}\cap D_i^-=\emptyset$;
	
	Case $-$: $\overline{X}\cap D_i^+=\emptyset,\,\overline{X}\cap D_i^-\neq\emptyset$;
	
	Case $\pm$: $\overline{X}\cap D_i^+\neq\emptyset,\,\overline{X}\cap D_i^-\neq\emptyset$;
	
	Case $0$: $\overline{X}\cap D_i^+=\emptyset,\,\overline{X}\cap D_i^-=\emptyset$,
\end{center}
where $\overline{X}$ represents the closure of $X$ in $\mathbb{R}^3$. Denote the sets consisting of such $X$'s in the four cases by $K_i^+,K_i^-,K_i^\pm,K_i^0$ respectively. For any subset $Q$ of them, we always denote $\bigcup_{X\in Q}\overline{X}$ by $|Q|$.

\begin{defn}\label{def:quasitangle}
	Let $\sigma$ be $+$ or $-$. For $U'$, a subset $Q \subset K_i^\sigma\cup K_i^0$ is a \emph{quasi-tangle} on $D_i^\sigma$, if one of the following two conditions holds:

	\begin{enumerate}
		\item $Q \cap K_i^\sigma \neq \emptyset$ and $L\cap D_i^\sigma - |Q| \neq \emptyset$;
		\item $Q$ only contains an ear on $D_i^\sigma$.
	\end{enumerate}
\end{defn}

\begin{rem}\label{rmk}
	We require $L\cap D_i^\sigma - |Q| \neq \emptyset$ in case that we, in practice, get caught in the task (see Flowchart \ref{fig:flowchart}) to prove $C_i$ itself can not bound an open disk in the complement of $U$, which is exactly the origin problem by Proposition \ref{prop:basic} and makes our method meaningless.
\end{rem}

\begin{figure}[htbp]
	\centering
	\ifpdf
	\setlength{\unitlength}{1bp}%
	\begin{picture}(315.36, 155.75)(0,0)
	\put(0,0){\includegraphics{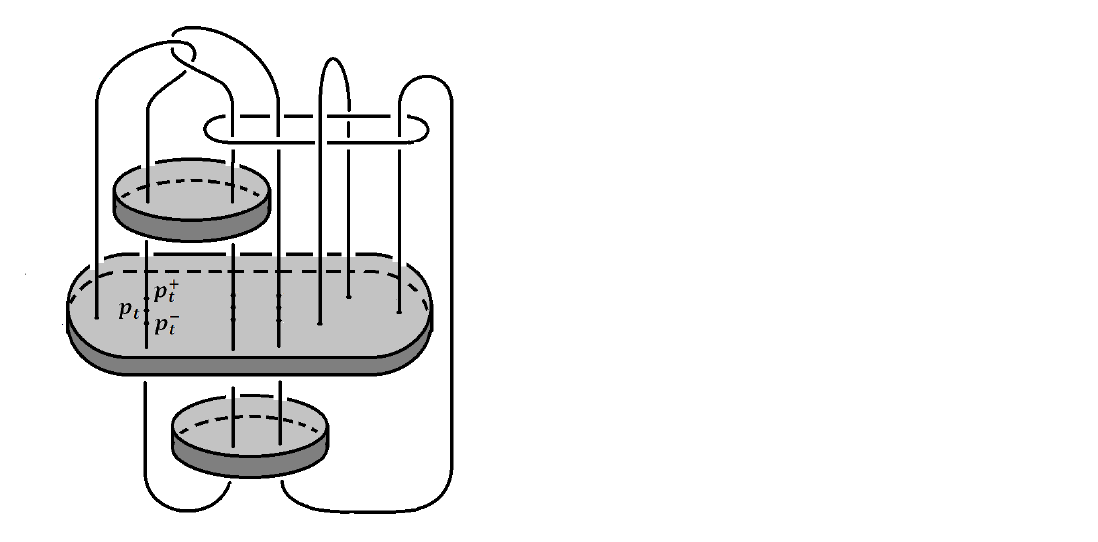}}
	\put(20.63,97.12){\fontsize{10.69}{12.83}\selectfont $a$}
	\put(119.72,106.13){\fontsize{10.69}{12.83}\selectfont $b$}
	\put(76.86,138.60){\fontsize{10.69}{12.83}\selectfont $c$}
	\put(102.55,97.12){\fontsize{10.69}{12.83}\selectfont $e$}
	\put(33.20,31.96){\fontsize{10.69}{12.83}\selectfont $d$}
	\put(122.03,31.96){\fontsize{10.69}{12.83}\selectfont $f$}
	\put(8.06,78.05){\fontsize{8.55}{10.26}\selectfont $D_1^+$}
	\put(8.06,47.89){\fontsize{8.55}{10.26}\selectfont $D_1^-$}
	\put(5.67,62.34){\fontsize{8.55}{10.26}\selectfont $\mathbf{D_1}$}
	\put(29.01,109.90){\fontsize{8.55}{10.26}\selectfont $\mathbf{D_2}$}
	\put(66.72,9.63){\fontsize{8.55}{10.26}\selectfont $\mathbf{D_3}$}
	\put(143.71,136.20){\fontsize{10.69}{12.83}\selectfont Thicken disks: $\mathbf{D_1,D_2,D_3}$.}
	\put(143.71,66.61){\fontsize{10.69}{12.83}\selectfont Ear on $D_1^+$: $e$.}
	\put(143.71,122.54){\fontsize{10.69}{12.83}\selectfont $K_1^+=\{ a\cup c\cup \mathbf{D_2},e \}$,}
	\put(143.71,108.45){\fontsize{10.69}{12.83}\selectfont $K_1^-=\emptyset$,}
	\put(143.71,94.35){\fontsize{10.69}{12.83}\selectfont $K_1^ \pm=\{ d\cup f\cup \mathbf{D_3} \}$,}
	\put(143.71,80.26){\fontsize{10.69}{12.83}\selectfont $K_1^ 0=\{  b \}$.}
	\put(143.71,53.27){\fontsize{10.69}{12.83}\selectfont Quasi-tangles on $D_1^+$: }
	\put(150.75,39.51){\fontsize{10.69}{12.83}\selectfont $\{ a\cup c\cup \mathbf{D_2} \}, \{ e \},$}
	\put(150.75,25.42){\fontsize{10.69}{12.83}\selectfont $\{ a\cup c\cup \mathbf{D_2}, e \}, \{a\cup c\cup \mathbf{D_2}, b \}, \{ e, b \}, $}
	\put(150.75,11.82){\fontsize{10.69}{12.83}\selectfont $\{ a\cup c\cup \mathbf{D_2}, e, b \}.$}
	\end{picture}%
	\else
	\setlength{\unitlength}{1bp}%
	\begin{picture}(315.36, 155.75)(0,0)
	\put(0,0){\includegraphics{test-subcomplex}}
	\put(20.63,97.12){\fontsize{10.69}{12.83}\selectfont $a$}
	\put(119.72,106.13){\fontsize{10.69}{12.83}\selectfont $b$}
	\put(76.86,138.60){\fontsize{10.69}{12.83}\selectfont $c$}
	\put(102.55,97.12){\fontsize{10.69}{12.83}\selectfont $e$}
	\put(33.20,31.96){\fontsize{10.69}{12.83}\selectfont $d$}
	\put(122.03,31.96){\fontsize{10.69}{12.83}\selectfont $f$}
	\put(8.06,78.05){\fontsize{8.55}{10.26}\selectfont $D_1^+$}
	\put(8.06,47.89){\fontsize{8.55}{10.26}\selectfont $D_1^-$}
	\put(5.67,62.34){\fontsize{8.55}{10.26}\selectfont $\mathbf{D_1}$}
	\put(29.01,109.90){\fontsize{8.55}{10.26}\selectfont $\mathbf{D_2}$}
	\put(66.72,9.63){\fontsize{8.55}{10.26}\selectfont $\mathbf{D_3}$}
	\put(143.71,136.20){\fontsize{10.69}{12.83}\selectfont Thicken disks: $\mathbf{D_1,D_2,D_3}$.}
	\put(143.71,66.61){\fontsize{10.69}{12.83}\selectfont Ear on $D_1^+$: $e$.}
	\put(143.71,122.54){\fontsize{10.69}{12.83}\selectfont $K_1^+=\{ a\cup c\cup \mathbf{D_2},e \}$,}
	\put(143.71,108.45){\fontsize{10.69}{12.83}\selectfont $K_1^-=\emptyset$,}
	\put(143.71,94.35){\fontsize{10.69}{12.83}\selectfont $K_1^ \pm=\{ d\cup f\cup \mathbf{D_3} \}$,}
	\put(143.71,80.26){\fontsize{10.69}{12.83}\selectfont $K_1^ 0=\{  b \}$.}
	\put(143.71,53.27){\fontsize{10.69}{12.83}\selectfont Quasi-tangles on $D_1^+$: }
	\put(150.75,39.51){\fontsize{10.69}{12.83}\selectfont $\{ a\cup c\cup \mathbf{D_2} \}, \{ e \},$}
	\put(150.75,25.42){\fontsize{10.69}{12.83}\selectfont $\{ a\cup c\cup \mathbf{D_2}, e \}, \{a\cup c\cup \mathbf{D_2}, b \}, \{ e, b \}, $}
	\put(150.75,11.82){\fontsize{10.69}{12.83}\selectfont $\{ a\cup c\cup \mathbf{D_2}, e, b \}.$}
	\end{picture}%
	\fi
	\caption{\label{fig:testcomplex}%
		A test subcomplex and its quasi-tangles}
\end{figure}

Let $Q$ be a quasi-tangle on $D_i^\sigma$ for $U'$. If there exists a circle $C \subset D_i^\sigma$ bounding disks $D_C,D_Q$, such that $D_C\subset D_i^\sigma,{\rm Int}(D_Q)\subset\mathbb{R}^3-U'$ and $D_C \cup D_Q$ bounds a 3-ball $B$ with $\overline{{\rm Int}(B)\cap U'} = |Q|$, then we say $Q$ is \emph{compressible for $U'$}. The circle $C$ is called an \emph{incredible circle of $Q$ for $U'$}, and $D_Q$ is called a \emph{compressing disk for $U'$}.

We point out that, if a quasi-tangle $Q$ is compressible for $U$ and $|Q \cap K_i^+ |$ is an ear $e$ of $C_j$, then $e$ is unknotted. In fact, otherwise $e$ would be a connected summand of the unknot $C_j$, which is impossible. As a consequence, it is easy to see that an ear $e$ is compressible for $U$ if and only if the quasi-tangle $\{e\}$ is compressible for $U$.

\subsection{Criterion}\label{subsect:criterionC}\

\begin{criterion}\label{criterionC}(\textbf{Circle-criterion})
	Let $L=\cup_{i=1}^n C_i$ be an $n$-component link with property (ST). Suppose $L$ is divided into sublinks $L_I$, $L_J$, and $L_0$ with respect to test disks $\{D_i\}_{i\in I}$ as in Preliminaries and $U$ is the test complex. If each quasi-tangle is not compressible for $U$, then $L$ is Brunnian.
\end{criterion}

\begin{proof}
	Assume to the contrary that $L$ is trivial. By Proposition \ref{prop:basic}, there are disjoint disks $\sqcup_{j \in J} D_j\subset \mathbb{R}^3 - L_I-L_0$ bounded by $L_J$ with $C_j =\partial D_j$. Consider the intersection $S\triangleq\cup_{j \in J} D_j\cap\cup_{i\in I}D_i$. It is a disjoint union of finitely many circles and arcs, and the endpoints of arc components are all on $L_J$. Any circle component $C\subset D_i\cap S$ $(i\in I)$ bounds a disk $D_i ^C$ on $D_i$. Denote $S_C\triangleq D_i ^C\cap S$.
	
	We can modify $\{D_j\}_{j\in J}$ so that $S_C$ includes at least one arc component for every circle component $C\subset S$. In fact, if $S_C$ has no arc component, then there is a circle component $C'\subset S_C$ innermost on $D_i$, that is, $S_{C'}=C'$. Thus we can replace the disk $C'$ bounds on that $D_j$ by $D_i ^{C'}$ and lift or lower it a little to eliminate $C'$ (see Fig. \ref{fig:innermost}). Inductively we get what we want.
	
	\begin{figure}[htbp]
		\centering
		\includegraphics[height=2cm]{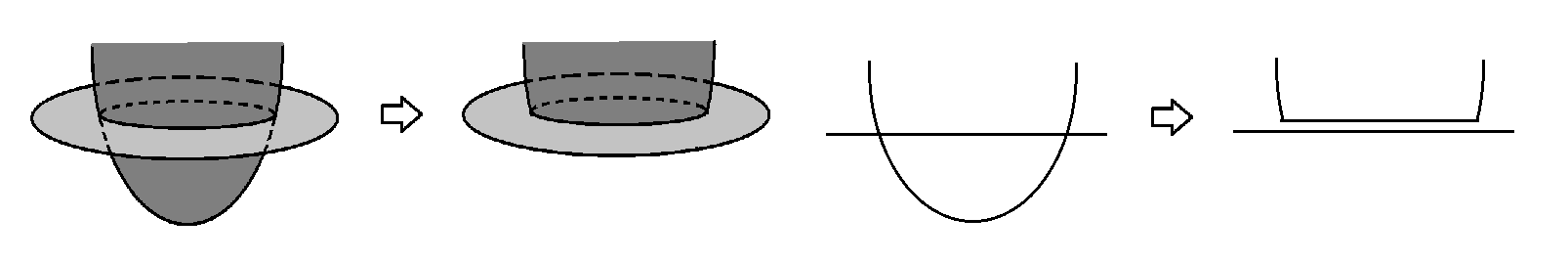}
		\caption{Eliminating an innermost circle}
		\label{fig:innermost}
	\end{figure}
	
	Moreover, modify $\{D_j\}_{j\in J}$ so that any circle component $C\subset S\cap D_i$ $(i\in I)$ is not outermost, that is, $S\cap D_i -S_C \neq \emptyset$. Otherwise $C\cup C_i$ bounds an annulus $A$, so $C$ can be eliminated by replacing a tubular neighbourhood of $C$ on $D_j$ by an annulus surrounding nearly outside $A$, as in Fig. \ref{fig:outermost}. Now fix such disks $\{D_j\}_{j\in J}$ and still denote $\cup_{j \in J} D_j\cap\cup_{i\in I}D_i$ by $S$.

	\begin{figure}[htbp]
		\centering
		\includegraphics[height=1.7cm]{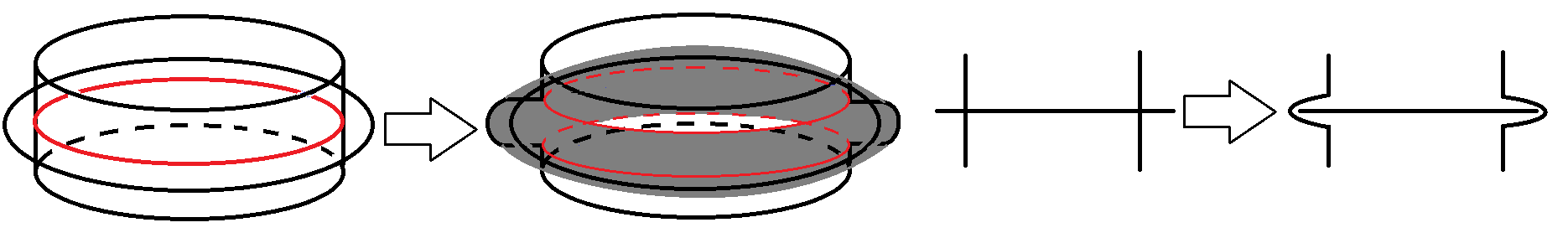}
		\caption{Eliminating an outermost circle}
		\label{fig:outermost}
	\end{figure}
	
	For any $j\in J$, all arc components of $D_j\cap S$ cut $D_j$ into some regions, each isomorphic to a disk. There are at least 2 such regions, say $R_1,R_2$, outermost on $D_j$, i.e., they include no arc component of $S$. If $R_1\cap S=\emptyset$, then $\partial R_1 \cap C_j$ is an ear compressible for $U$ with compressing disk $R_1$. Otherwise, there exists a circle $C\subset R_1\cap S$ innermost on $D_j$. Suppose $C\subset D_i$ $(i\in I)$ bounds $D_i ^C,D_j ^C$ on $D_i,D_j$ respectively, and $B$ is the 3-ball bounded by the sphere $D_i ^C\cup D_j ^C$. As $S_C$ includes an arc component of $S$, $B$ contains a quasi-tangle which is thus compressible for $U$. In either case, there is a contradiction.
\end{proof}

This criterion, as well as Arc-criterion, provides a necessary and sufficient condition for $L$ to be trivial. In fact, if there is a compressible quasi-tangle on $D_i^+$ or $D_i^-$, we can compress it across the thicken disk $\mathbf{D_i}$ by an isotopy of $L$, to eliminate it. Do it inductively. $L$ is trivial if and only if finally all intersections are eliminated.

Generally, let $L$ be a link with each component unknotted. Suppose each proper sublink of $L$ is trivial, otherwise consider its proper sublinks first. Then we can follow Flowchart  \ref{fig:flowchart} to decide whether it is Brunnian. We consider quasi-tangles of cardinality $1,2,\cdots$ in order so that Sub-criterion \ref{prop:subcomplex} and \ref{prop:boundaryconnectedsum} can be applied to reduce the task.

\begin{figure}
	\centering
	\tikzstyle{startstop} =[rectangle,rounded corners, minimum width = 2cm, minimum height=1cm,text centered, draw = red, line width=1pt]
	\tikzstyle{io} = [trapezium, trapezium left angle=70, trapezium right angle=110, minimum width=1cm, minimum height=1cm, text centered, draw=black, inner sep=12pt]
	\tikzstyle{process} = [rectangle, minimum width=2cm, minimum height=1cm, text centered, draw=black]
	\tikzstyle{decision} = [diamond, aspect = 4, text centered, draw=black]
	\tikzstyle{arrow} = [->,>=stealth]
	
	\begin{tikzpicture}[node distance=2cm]
	
	\node[startstop](start){Start};
	\node[io, below of = start, yshift = 0.5cm](link){Link $L=\cup_{i=1}^n C_i$ satisfying (ST)};
	\node[process, align=center, below of = link, yshift = 0.5cm](test){Fix test disks $\{D_i\}_{i\in I}$ and test \\ complex $U = \coprod_{i \in I} \mathbf{D_i} \cup L$.};
	\node[decision, below of = test, yshift = 0.3cm](noLJ){$L_J$ is empty?};
	\node[process, below of = noLJ, xshift=0cm, yshift = 0cm](sets1){$s=1$};
	\node[io, left of = sets1, node distance = 5cm](trivial){$L$ is trivial.};
	\node[process, below of = sets1, yshift = 0cm](qt){Consider all quasi-tangles of cardinality $s$.};
	\node[startstop, left of = qt, node distance= 5cm, yshift = 0cm](stop){Stop};
	\node[decision, below of = qt, yshift = 0cm](exist){Do they exist?};
	\node[process, right of = exist, node distance=4cm](sets2){$s=s+1$};
	\node[decision, below of = exist, yshift=0cm](compressible){All incompressible for $U$?};
	\node[process, align=center, below of = compressible, yshift = 0cm](eliminate){Eliminate a compressible quasi-tangle \\ by pushing it through the corresponding thicken disk; \\ equivalently, adjust the test disks and test complex.};
	\node[io, left of = exist, xshift=-3cm, yshift = 0cm](Brunnian){$L$ is Brunnian.};
	\coordinate (point) at (5.5cm, -3cm);
	
	\draw [arrow] (start) -- (link);
	\draw [arrow] (link) -- (test);
	\draw [arrow] (test) -- (noLJ);
	\draw [arrow] (noLJ) -- node[right]{No}(sets1);
	\draw [arrow] (noLJ) -| node[above]{Yes}(trivial);
	\draw [arrow] (trivial) -- (stop);
	\draw [arrow] (sets1) -- (qt);
	\draw [arrow] (qt) -- (exist);
	\draw [arrow] (exist) -- node[above] {No} (Brunnian);
	\draw [arrow] (Brunnian) -- (stop);
	\draw [arrow] (exist) -- node[right] {Yes} (compressible);
	\draw [arrow] (compressible) -| node[below] {Yes} (sets2);
	\draw [arrow] (sets2) |- (qt);
	\draw [arrow] (compressible) -- node[right] {No} (eliminate);
	\draw (eliminate) -| (point);
	\draw [arrow] (point) -- (test);
	\end{tikzpicture}
	\caption{Flowchart of Circle-method}
	\label{fig:flowchart}
\end{figure}
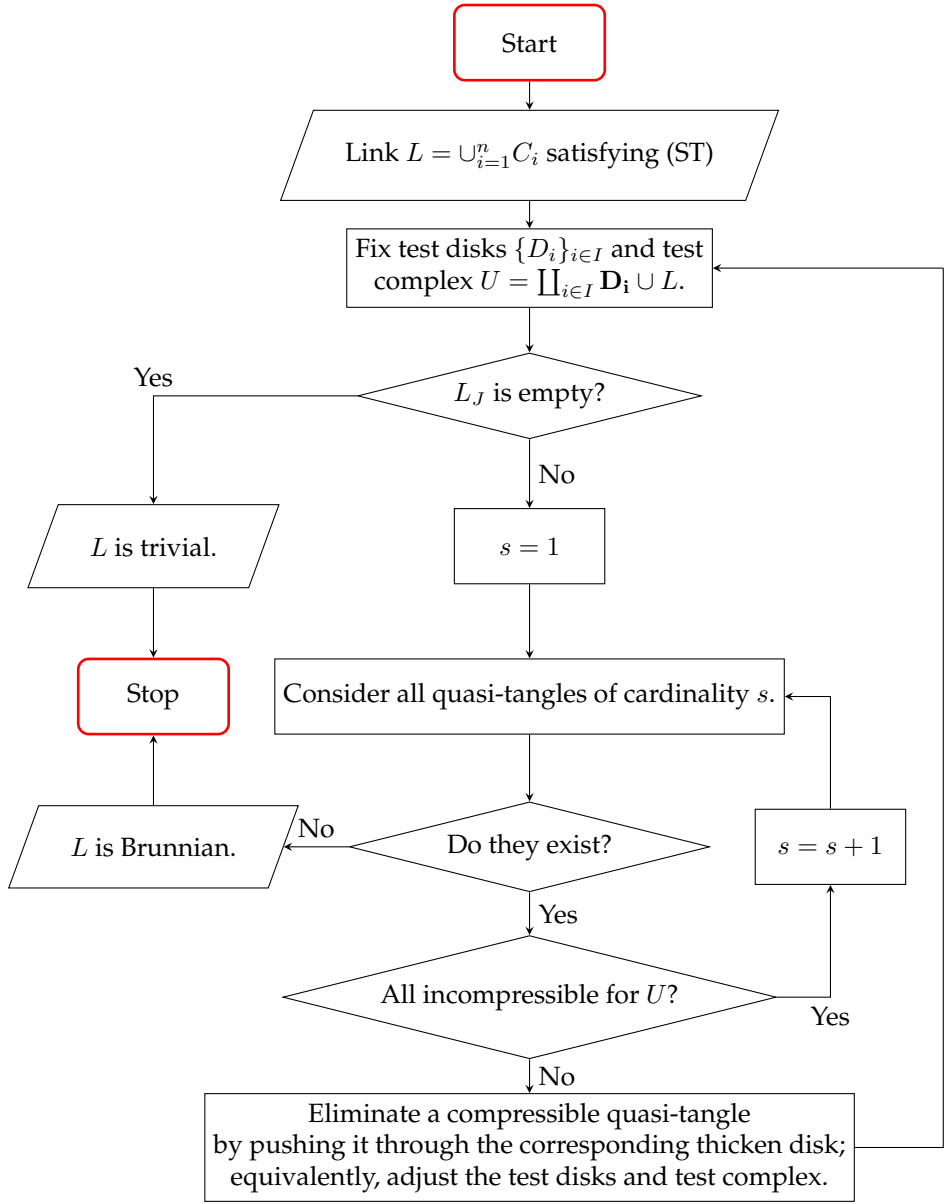

\begin{rem}
	There may be only one compressible quasi-tangle for an unlink with respect to some test complex. For instance, see Fig. \ref{fig:uniqueqt}, where the test disk is colored grey and the only compressible quasi-tangle is in the red box.
	
	\begin{figure}[htbp]
		\centering
		\includegraphics[height=5cm]{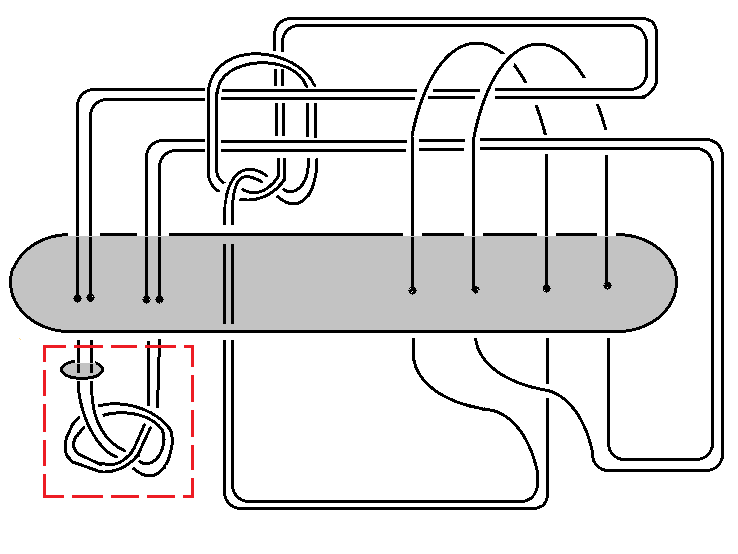}
		\caption{An unlink with only one compressible quasi-tangle}
		\label{fig:uniqueqt}
	\end{figure}
\end{rem}

\subsection{Auxiliary criteria}\label{subsect:auxiliary}\

Given a link $L$ with (ST), to prove (NT) by Circle-criterion, we need to choose some text complex $U$ and show all quasi-tangles are incompressible for $U$. It is sometimes no easy task. We provide four helpful criteria here. The first one can be easily proved by contradiction, and proofs of the others will be given in Subsection \ref{subsect:proof}.

\begin{subcriterion}\label{prop:subcomplex}(\textbf{Containment-criterion})
	Let $U'_1$ and $U'_2$ be test subcomplexes with $U'_1 \subset U'_2$. If a quasi-tangle $Q_1$ is incompressible for $U'_1$, then for any quasi-tangle $Q_2$ with $|Q_1|\subset |Q_2|\subset U'_2$ and $|Q_2-Q_1| \subset U'_2 - U'_1$, $Q_2$ is incompressible for $U'_2$.
\end{subcriterion}

Fix a test subcomplex $U'$ and without loss of generality, suppose $Q$ is a quasi-tangle on $D_1^+$. Take a disk $D_0\subset D_1^+$ such that $(D_1^+ - D_0)\cap L =\emptyset$ and consider the test subcomplex $U'_Q  \triangleq \mathbf{D_1} \cup |Q|$. Then $\partial D_0$ bounds a disk $D$ outside $U'_Q$ such that $D_0 \cup D$ bounds a 3-ball $B$ including $|Q|$. Lemma \ref{lem:tangleball} in Subsection \ref{subsect:proof} shows $B$ is unique for $Q$ in some sense and thus we also call the pair $(B,Q)$ a quasi-tangle.

\begin{defn}\label{def:unsplit}
	A quasi-tangle $(B,Q)$ is \emph{split}, if there exists a properly embedded disk $D$ in $B$ avoiding $|Q|$,  and a partition $Q=Q_1\cup Q_2$, such that $\partial D$ intersects $\partial D_0$ with two points, and $Q_1,Q_2$ are both non-empty and included in the different components of $B-D$. (Here $Q_1$ or $Q_2$ is not necessary to be a quasi-tangle.) $(B,Q)$ is \emph{unsplit} if it is not split.
\end{defn}

\begin{subcriterion}\label{prop:boundaryconnectedsum}(\textbf{Split-criterion})
	Let $U'$ be a test subcomplex and $Q$ be a quasi-tangle for $U'$. Suppose $Q$ is split with respect to the partition $Q=Q_1\cup Q_2$. If $Q_1$ is an incompressible quasi-tangle for $U'$, then $Q$ is also incompressible.
\end{subcriterion}

Therefore, in Criterion \ref{criterionC}, it suffices to show every unsplit quasi-tangle is incompressible for $U$.

Next we give an equivalent condition for a quasi-tangle to be compressible. Fix a test subcomplex $U'$ and consider a quasi-tangle on $D_1^+$, say $(B,Q)$. See Fig. \ref{fig:trivializedsubstitution}. Take a 3-ball $B_T$ in the closure of $\mathbb{R}^3-U'$ such that $B_T \cap U'$ is a disk $D_T\subset D_1^+$. There is an orientation-preserving homeomorphism $h:B\to B_T$ that maps $B\cap D_1^+$ to $D_T$. The complex $\overline{U'-|Q|} \cup h(|Q|)$ is called a \emph{trivial substitution of $Q$ for $U'$}. Up to isotopy, it is unique and independent of the choices of $h,B,B_T$.

\begin{figure}[htbp]
	\centering
	\includegraphics[height=2.5cm]{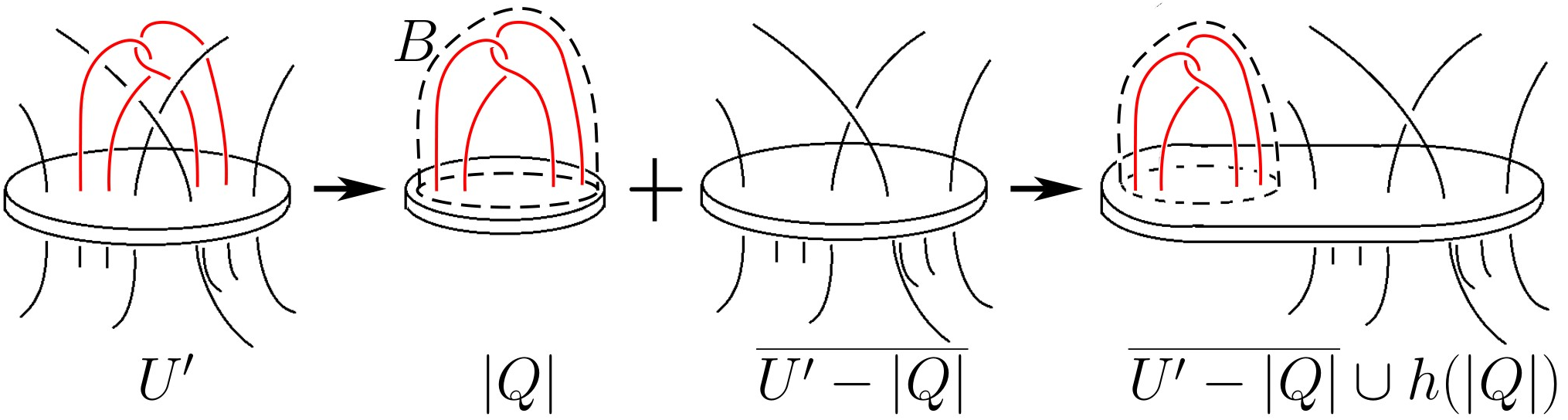}
	\caption{Trivial substitution}
	\label{fig:trivializedsubstitution}
\end{figure}

\begin{subcriterion}\label{cor:incompressibility}(\textbf{Substitution-criterion})
	Let $Q$ be a quasi-tangle for test subcomplex $U'$. If the trivial substitution of $Q$ for $U'$ is not isotopic to $U'$ in $\mathbb{R}^3$, then $Q$ is incompressible for $U'$.
\end{subcriterion}

Instead of investigating each unsplit quasi-tangle, we can also consider all candidates for incredible circles. If we can somehow exclude their potential, then by Circle-criterion, (NT) is proved. Generally there are infinitely many such candidate circles, and we need to narrow our focus.

\begin{subcriterion}\label{prop:incrediblecircle}(\textbf{Criterion for incredible circles})
	Let $C$ be an incredible circle of an unsplit quasi-tangle $Q$ for the test complex $U$. Fix a test subcomplex $U'$ containing $|Q|$ and suppose $C'$ is an incredible circle of $Q$ for $U'$. If there is no compressible quasi-tangle on $D_1^+$ for the test subcomplex $\overline{U'-|Q|}$, then $C$ is isotopic to $C'$ on $D_1^+$ in $U'$.
\end{subcriterion}

\subsection{Example analysis}\label{subsect:examplesC}\

We have established Circle-method, consisting of Circle-criterion, a flowchart, and four sub-criteria, in the previous two subsections. Some typical examples demonstrate efficiency of the criteria.

\begin{example}\label{example:gWl}
	(Fig. \ref{fig:gWl}) Take the test disk colored in gray and test subcomplex $U'$ on the right. There are two unknotted ears $e_1,e_2$, and only two quasi-tangles $\{e_1\},\{e_2\}$. Connect the endpoints of the ears by arcs $\alpha_1,\alpha_2$ on the corresponding sides, then we see $lk ( e_1 \cup \alpha_1 , e_2 \cup \alpha_2 ) =1$, thus neither quasi-tangle is compressible for $U'$. By Sub-criterion \ref{prop:subcomplex} and Circle-criterion, the link is Brunnian.
	
	\begin{figure}[htbp]
		\centering
		\ifpdf
		\setlength{\unitlength}{1bp}%
		\begin{picture}(140.49, 93.04)(0,0)
		\put(0,0){\includegraphics{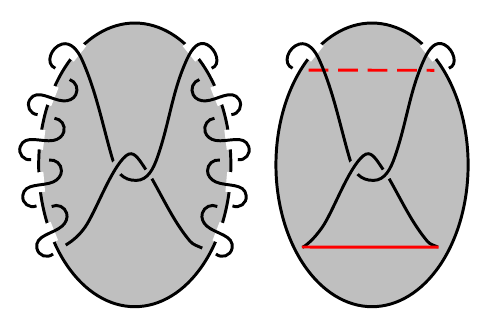}}
		\put(88.77,58.15){\fontsize{8.03}{9.64}\selectfont $e_1$}
		\put(118.90,33.30){\fontsize{8.03}{9.64}\selectfont $e_2$}
		\put(103.06,76.16){\fontsize{8.03}{9.64}\selectfont $\alpha_1$}
		\put(103.06,15.91){\fontsize{8.03}{9.64}\selectfont $\alpha_2$}
		\end{picture}%
		\else
		\setlength{\unitlength}{1bp}%
		\begin{picture}(140.49, 93.04)(0,0)
		\put(0,0){\includegraphics{gWl}}
		\put(88.77,58.15){\fontsize{8.03}{9.64}\selectfont $e_1$}
		\put(118.90,33.30){\fontsize{8.03}{9.64}\selectfont $e_2$}
		\put(103.06,76.16){\fontsize{8.03}{9.64}\selectfont $\alpha_1$}
		\put(103.06,15.91){\fontsize{8.03}{9.64}\selectfont $\alpha_2$}
		\end{picture}%
		\fi
		\caption{\label{fig:gWl}%
			Generalized Whitehead link}
	\end{figure}
\end{example}

The third, fourth and last picture in \cite{D.Rolfsen} Chapter 5 section E Exercise 5 generalize Whitehead link in different ways. The fourth is Example \ref{example:gWl} and the other two can be proved by Arc-criterion. The link occurs in Chapter 7 section C Theorem 5 in \cite{D.Rolfsen}, and Fig. 1.5 in \cite{BS} are also Brunnian, proved similarly by Circle-criterion.

\begin{example} \label{example:Milnor}
	Milnor link and Brunn's chain, as shown in Fig. \ref{fig:milnorlink}.
	
	For Milnor link, take the gray test disk and by Arc-method, we only need to show the ear $e$ is incompressible. Connecting the two endpoints by arc $\alpha$, we obtain another Milnor link with one less component and it suffices to prove the new link is Brunnian. In this way we can begin recursion until Hopf link.
	
	For Brunn's chain, take the test disk colored in gray and fix sublinks $L_I,L_J,L_0$ as in Section \ref{sect:preliminaries}. The unknotted ears $e_1,e_3$ are symmetric, and so are $e_2,e_4$. Delete $e_3$ and $e_4$, and connect the endpoints of $e_1,e_2$ by arcs $\alpha_1,\alpha_2$ on corresponding sides respectively, then $e_1 \cup \alpha_1,e_2 \cup \alpha_2$ and $L_3$ form a Milnor link. So neither $e_1$ nor $e_2$ is compressible. According to the definition, each quasi-tangle contains only one of $e_1,e_2,e_3,e_4$ and some components of $L_3$. One may check easily that such a quasi-tangle is split. So by Sub-criterion \ref{prop:boundaryconnectedsum}, every quasi-tangle is incompressible.
	
	\begin{figure}[htbp]
		\centering
		\ifpdf
		\setlength{\unitlength}{1bp}%
		\begin{picture}(325.65, 185.42)(0,0)
		\put(0,0){\includegraphics{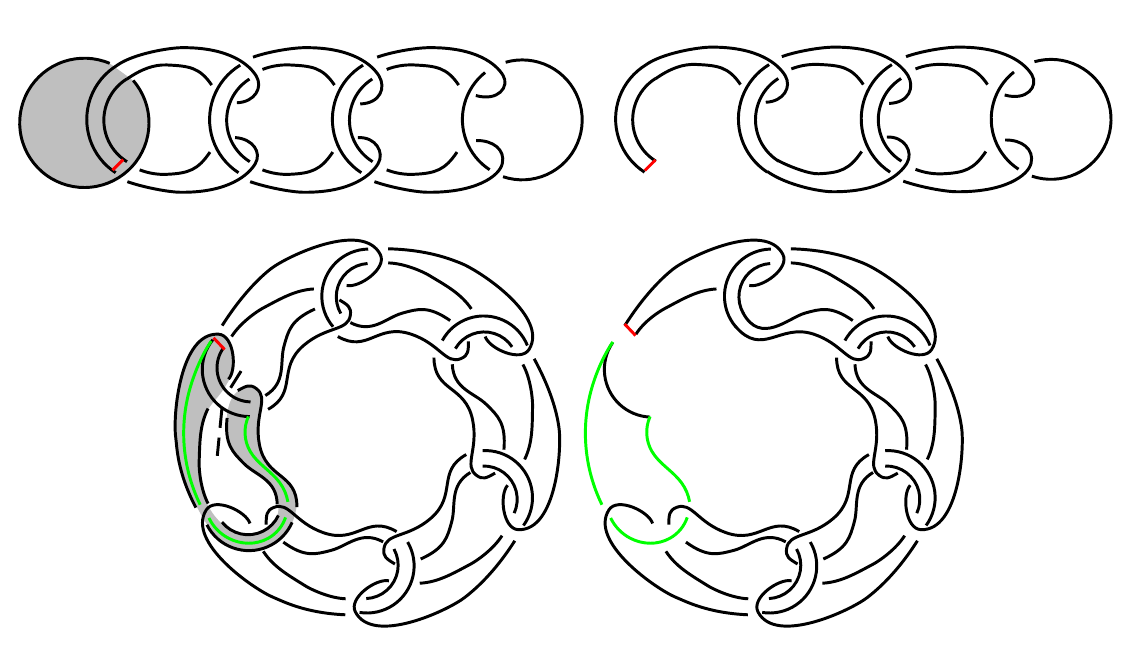}}
		\put(64.41,104.32){\fontsize{8.83}{10.60}\selectfont $e_1$}
		\put(84.14,72.55){\fontsize{8.83}{10.60}\selectfont $e_3$}
		\put(181.11,104.32){\fontsize{8.83}{10.60}\selectfont $e_1$}
		\put(179.10,70.88){\fontsize{8.83}{10.60}\selectfont $e_2$}
		\put(181.11,84.26){\fontsize{8.83}{10.60}\selectfont \textcolor[rgb]{1, 0, 0}{$\alpha_1$}}
		\put(194.48,50.82){\fontsize{8.83}{10.60}\selectfont \textcolor[rgb]{0, 1, 0}{$\alpha_2$}}
		\put(69.76,79.41){\fontsize{8.83}{10.60}\selectfont $e_4$}
		\put(59.73,49.31){\fontsize{8.83}{10.60}\selectfont $e_2$}
		\put(41.54,172.85){\fontsize{8.83}{10.60}\selectfont $e$}
		\put(188.83,157.84){\fontsize{8.83}{10.60}\selectfont $e$}
		\put(186.95,132.65){\fontsize{8.83}{10.60}\selectfont \textcolor[rgb]{1, 0, 0}{$\alpha$}}
		\end{picture}%
		\else
		\setlength{\unitlength}{1bp}%
		\begin{picture}(325.65, 185.42)(0,0)
		\put(0,0){\includegraphics{Milnorlink}}
		\put(64.41,104.32){\fontsize{8.83}{10.60}\selectfont $e_1$}
		\put(84.14,72.55){\fontsize{8.83}{10.60}\selectfont $e_3$}
		\put(181.11,104.32){\fontsize{8.83}{10.60}\selectfont $e_1$}
		\put(179.10,70.88){\fontsize{8.83}{10.60}\selectfont $e_2$}
		\put(181.11,84.26){\fontsize{8.83}{10.60}\selectfont \textcolor[rgb]{1, 0, 0}{$\alpha_1$}}
		\put(194.48,50.82){\fontsize{8.83}{10.60}\selectfont \textcolor[rgb]{0, 1, 0}{$\alpha_2$}}
		\put(69.76,79.41){\fontsize{8.83}{10.60}\selectfont $e_4$}
		\put(59.73,49.31){\fontsize{8.83}{10.60}\selectfont $e_2$}
		\put(41.54,172.85){\fontsize{8.83}{10.60}\selectfont $e$}
		\put(188.83,157.84){\fontsize{8.83}{10.60}\selectfont $e$}
		\put(186.95,132.65){\fontsize{8.83}{10.60}\selectfont \textcolor[rgb]{1, 0, 0}{$\alpha$}}
		\end{picture}%
		\fi
		\caption{\label{fig:milnorlink}%
			Milnor link and Brunn's chain}
	\end{figure}
\end{example}

Milnor link occurs in Fig. 7 in \cite{M}, Fig. 2.25 in \cite{BS}, and  Brunn's chain appears in \cite{B,HDb}, also in \cite{D.Rolfsen} Chapter 7 section J, and Fig. 2.22 and 2.28 in \cite{BS}.

The quick proofs above illustrate how Sub-criterion \ref{prop:subcomplex} and \ref{prop:boundaryconnectedsum} simplify the task in Flowchart \ref{fig:flowchart}, while the next two examples show the way Sub-criterion \ref{cor:incompressibility} and \ref{prop:incrediblecircle} work for complicated links.
	
\begin{figure}[htbp]
	\centering
	\ifpdf
	\setlength{\unitlength}{1bp}%
	\begin{picture}(350.85, 82.56)(0,0)
	\put(0,0){\includegraphics{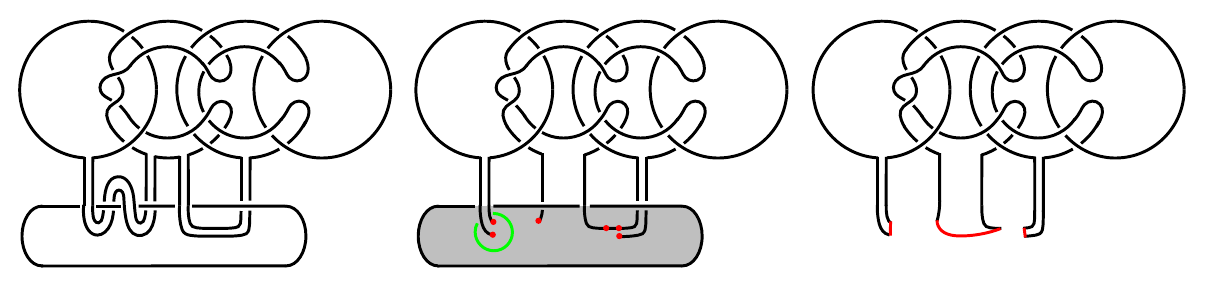}}
	\put(89.40,7.97){\fontsize{10.69}{12.83}\selectfont $C_1$}
	\put(7.89,51.44){\fontsize{10.69}{12.83}\selectfont $C_2$}
	\put(98.91,51.44){\fontsize{10.69}{12.83}\selectfont $C_3$}
	\put(236.12,51.44){\fontsize{10.69}{12.83}\selectfont $C$}
	\put(126.46,31.61){\fontsize{10.69}{12.83}\selectfont $e_1$}
	\put(188.41,31.61){\fontsize{10.69}{12.83}\selectfont $e_2$}
	\put(327.42,51.44){\fontsize{10.69}{12.83}\selectfont $C_3$}
	\end{picture}%
	\else
	\setlength{\unitlength}{1bp}%
	\begin{picture}(350.85, 82.56)(0,0)
	\put(0,0){\includegraphics{Lf1}}
	\put(89.40,7.97){\fontsize{10.69}{12.83}\selectfont $C_1$}
	\put(7.89,51.44){\fontsize{10.69}{12.83}\selectfont $C_2$}
	\put(98.91,51.44){\fontsize{10.69}{12.83}\selectfont $C_3$}
	\put(236.12,51.44){\fontsize{10.69}{12.83}\selectfont $C$}
	\put(126.46,31.61){\fontsize{10.69}{12.83}\selectfont $e_1$}
	\put(188.41,31.61){\fontsize{10.69}{12.83}\selectfont $e_2$}
	\put(327.42,51.44){\fontsize{10.69}{12.83}\selectfont $C_3$}
	\end{picture}%
	\fi
	\caption{\label{fig:Lf1}%
		Example \ref{example:Lf1}}
\end{figure}
	
\begin{example}\label{example:Lf1} (Fig. \ref{fig:Lf1})
	Choose the test subcomplex $U'$ shown in the middle. There are only two ears $e_1$ and $e_2$ on different sides of the test disk, and by Sub-criterion \ref{prop:boundaryconnectedsum} we only need to consider these two ears. In the complement of the trivial substitution of $\{e_2\}$ for $U'$, $C_3$ can bound a disk. However, there is a nontrivial link as a subset of $U'$, shown on the right, which is similar to Milnor link and implies $C_4$ can not bound a disk in the complement of $U'$. Thus by Sub-criterion \ref{cor:incompressibility}, $\{e_2\}$ is incompressible for $U'$. As for $\{e_1\}$, we see the green circle is the unique candidate for its incredible circle up to isotopy according to Sub-criterion \ref{prop:incrediblecircle}. So it suffices to show the circle $C$ can not bound a disk in the complement of $U'$, which is again implied by the nontrivial link on the right.
\end{example}

Fig. 1 and 2 in \cite{L} are just similar to Example \ref{example:Lf1}.

\begin{example}\label{example:An} (Fig. \ref{fig:A5})
	The link $L_F$ in \cite{HDb}, also $A_n$ of Exercise 15 in Chapter 3 section F in \cite{D.Rolfsen}. Proof for the case $n=2, 3$ need only to be changed a little.
	
	\begin{figure}[htbp]
		\centering
		\ifpdf
		\setlength{\unitlength}{1bp}%
		\begin{picture}(347.58, 124.22)(0,0)
		\put(0,0){\includegraphics{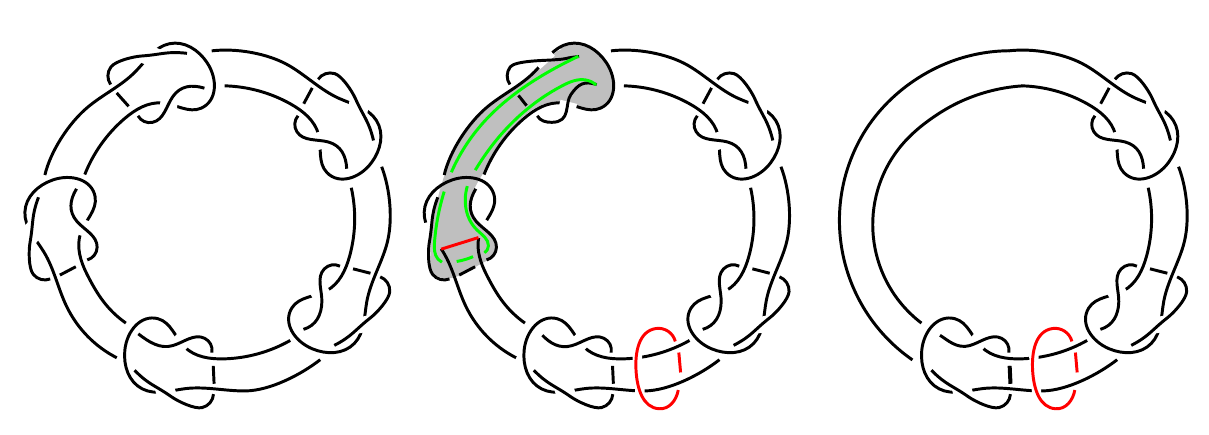}}
		\put(9.76,91.65){\fontsize{10.69}{12.83}\selectfont $C_1$}
		\put(9.76,24.16){\fontsize{10.69}{12.83}\selectfont $C_2$}
		\put(83.66,7.97){\fontsize{10.69}{12.83}\selectfont $C_3$}
		\put(88.72,57.91){\fontsize{10.69}{12.83}\selectfont $C_4$}
		\put(77.25,109.87){\fontsize{10.69}{12.83}\selectfont $C_5$}
		\put(129.06,25.74){\fontsize{10.69}{12.83}\selectfont $e_1$}
		\put(143.14,110.20){\fontsize{10.69}{12.83}\selectfont $e_2$}
		\put(179.09,31.83){\fontsize{10.69}{12.83}\selectfont \textcolor[rgb]{1, 0, 0}{$C$}}
		\put(58.16,57.86){\fontsize{10.69}{12.83}\selectfont $A_5$}
		\put(287.60,57.86){\fontsize{10.69}{12.83}\selectfont $A_3$}
		\put(144.69,67.04){\fontsize{10.69}{12.83}\selectfont $e_4$}
		\put(182.26,90.48){\fontsize{10.69}{12.83}\selectfont $e_3$}
		\put(293.31,31.83){\fontsize{10.69}{12.83}\selectfont \textcolor[rgb]{1, 0, 0}{$C$}}
		\put(126.68,86.53){\fontsize{10.69}{12.83}\selectfont \textcolor[rgb]{0, 1, 0}{$\alpha$}}
		\end{picture}%
		\else
		\setlength{\unitlength}{1bp}%
		\begin{picture}(347.58, 124.22)(0,0)
		\put(0,0){\includegraphics{A5}}
		\put(9.76,91.65){\fontsize{10.69}{12.83}\selectfont $C_1$}
		\put(9.76,24.16){\fontsize{10.69}{12.83}\selectfont $C_2$}
		\put(83.66,7.97){\fontsize{10.69}{12.83}\selectfont $C_3$}
		\put(88.72,57.91){\fontsize{10.69}{12.83}\selectfont $C_4$}
		\put(77.25,109.87){\fontsize{10.69}{12.83}\selectfont $C_5$}
		\put(129.06,25.74){\fontsize{10.69}{12.83}\selectfont $e_1$}
		\put(143.14,110.20){\fontsize{10.69}{12.83}\selectfont $e_2$}
		\put(179.09,31.83){\fontsize{10.69}{12.83}\selectfont \textcolor[rgb]{1, 0, 0}{$C$}}
		\put(58.16,57.86){\fontsize{10.69}{12.83}\selectfont $A_5$}
		\put(287.60,57.86){\fontsize{10.69}{12.83}\selectfont $A_3$}
		\put(144.69,67.04){\fontsize{10.69}{12.83}\selectfont $e_4$}
		\put(182.26,90.48){\fontsize{10.69}{12.83}\selectfont $e_3$}
		\put(293.31,31.83){\fontsize{10.69}{12.83}\selectfont \textcolor[rgb]{1, 0, 0}{$C$}}
		\put(126.68,86.53){\fontsize{10.69}{12.83}\selectfont \textcolor[rgb]{0, 1, 0}{$\alpha$}}
		\end{picture}%
		\fi
		\caption{\label{fig:A5}%
			The link $A_5$}
	\end{figure}
	
	Take the gray test disk and denote the test complex by $U$. By symmetry and Sub-criterion \ref{prop:boundaryconnectedsum}, we only need to prove ears $e_1,e_2$ are both incompressible for $U$.

	First take test subcomplex $U'_1=U-e_2-e_4$. Then up to isotopy, there is only one arc connecting $\partial e_1$ on $D_1^+$. The union of this arc and $e_1$ is isotopic to the red circle $C$. Notice that there is a link $A_{n-2}$ as a subspace of $U'_1$, and it is easily proven that $C$ is homotopically nontrivial in the complement of $A_{n-2}$ (c.f \cite{D.Rolfsen} p.69). Thus $e_1$ is incompressible for $U'_1$.

	For $e_2$, take test subcomplex $U'_2=U-e_4$, and let $N$ be a regular neighborhood of the arc $\alpha$ on $D_1^+- \partial e_1$ connecting $\partial e_2$. Then $e_1$ is still incompressible and $\{e_2\}$ is compressible with incredible circle $\partial N$. Sub-criterion \ref{prop:incrediblecircle} asserts $\partial N$ is the unique candidate for incredible circle of $\{ e_2 \}$ for $U$. However, $e_2 \cap \alpha$ is isotopic to $C$ in the complement of $A_n$. Thus $e_2$ is incompressible.
\end{example}

Sub-criterion \ref{cor:incompressibility} and \ref{prop:incrediblecircle} apply to Fig. 4 in \cite{MM}, the examples in Exercise 4 and the 6th link in Exercise 5 in \cite{D.Rolfsen} Chapter 5 section E, Fig. 2(b) in \cite{BFJRVZ}, Fig. 2 in \cite{BCS}, Fig. 3 in \cite{F}, and Fig. 6 in \cite{M} as well.

\subsection{Proofs and some basic facts}\label{subsect:proof}\

\begin{lem}\label{lem:tangleball}
	With the notation in the paragraph before Definition \ref{def:unsplit}, suppose $\partial D_0$ bounds another disk $\tilde{D}$ outside $U'_Q$, and denote the corresponding 3-ball including $|Q|$ by $\tilde{B}$. Then there is a homeomorphism $f:(B \cup \mathbf{D_1}, U'_Q )\to (\tilde{B} \cup \mathbf{D_1}, U'_Q )$ which is fixed on $U'_Q$.
\end{lem}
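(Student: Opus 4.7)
The plan is to reduce to the case that $D$ and $\tilde{D}$ meet only along their common boundary $\partial D_0$, and then build $f$ by a collar-swap supported away from $U'_Q$. First I would put $D$ and $\tilde D$ in general position in the interior (their boundaries already coincide with $\partial D_0$, and can be perturbed slightly so that the intersection is a disjoint union of circles). Then I apply the standard innermost-disk argument inside $D$: pick a circle $C\subset D\cap\tilde D$ innermost on $D$, so the sub-disk $D_C\subset D$ bounded by $C$ has interior disjoint from $\tilde D$, and let $\tilde D_C\subset\tilde D$ be the sub-disk bounded by $C$. Since $D_C\subset D\subset \mathbb{R}^3-U'_Q$, we can surger $\tilde D$ by replacing $\tilde D_C$ with a small push-off of $D_C$; the push-off direction is chosen so that the resulting disk, together with $D_0$, still bounds a $3$-ball containing $|Q|$ (this is possible because $|Q|$ lies entirely on one side of the sphere $D_C\cup\tilde D_C$). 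After finitely many such surgeries we may assume $D\cap\tilde D=\partial D_0$.

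Next $D\cup\tilde D$ is a $2$-sphere in $\mathbb R^3$, so by the Schoenflies theorem it bounds a closed $3$-ball $V$. Both $B$ and $\tilde B$ lie on the $+$-side of $D_1^+$ and both contain $|Q|$, and their boundaries share only $D_0$ together with the circle $\partial D_0$; therefore one ball is contained in the other, and after relabeling we may assume $B\subset\tilde B$, giving $V=\overline{\tilde B-B}$. One checks $V\cap U'_Q=\partial D_0$: indeed $V\cap|Q|=\emptyset$ because $|Q|\subset B$, and $V\cap\mathbf D_1=\partial D_0$ because $V$ lies on the $+$-side of $D_1^+$ whereas $\mathbf D_1$ is the thickening whose interior lies between $D_1^+$ and $D_1^-$.

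Finally I construct $f$ by a collar-swap. Choose a collar $N\cong D\times[0,\varepsilon]$ of $D$ in $B$, with $D\times\{0\}=D$, small enough that $N\cap|Q|=\emptyset$ and $N\cap D_0=\partial D_0$. Set $N':=N\cup V$, a $3$-ball obtained by gluing $V$ to $N$ along $D$; its boundary sphere decomposes into the disks $\tilde D$ and $D\times\{\varepsilon\}$ joined by the annulus $\partial D\times[0,\varepsilon]$, and $\partial N$ admits the same decomposition with $D$ in place of $\tilde D$. A homeomorphism $D\to\tilde D$ fixing $\partial D_0$ pointwise, combined with the identity on $D\times\{\varepsilon\}$ and on $\partial D\times[0,\varepsilon]$, gives a homeomorphism $\partial N\to\partial N'$ which, by the Alexander trick for $3$-balls, extends to a homeomorphism $h:N\to N'$. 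Define
\[
f=\begin{cases} \mathrm{id} & \text{on } (B-\mathrm{Int}(N))\cup\mathbf D_1,\\ h & \text{on } N.\end{cases}
\]
The two pieces agree on the overlap $\partial N-\mathrm{Int}(D)$ and on $D_0$, so $f$ is a well-defined homeomorphism $B\cup\mathbf D_1\to\tilde B\cup\mathbf D_1$; by construction $f|_{U'_Q}=\mathrm{id}$, as required. The only real subtlety in the argument is the book-keeping in Step 1 that the push-off in each surgery can be chosen to preserve the property that the new disk still bounds, with $D_0$, a $3$-ball containing $|Q|$ in its interior; everything else (Schoenflies, the Alexander trick, and the matching of the three-piece boundary decompositions of $N$ and $N'$) is standard.
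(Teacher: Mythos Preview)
Your approach is genuinely different from the paper's, and Steps 2 and 3 are essentially sound (though the ``therefore'' in Step 2 deserves a line of argument: one shows that if neither $\mathrm{Int}(D)\subset\tilde B$ nor $\mathrm{Int}(\tilde D)\subset B$, then $\partial(B\cap\tilde B)\subset D_0$, which is impossible since $B\cap\tilde B$ has interior on the $+$ side of $D_0$ while $\mathbb{R}^3\setminus D_0$ is connected). The paper instead passes to $\mathbb{S}^3$, where $B_1:=\mathbb{S}^3\setminus\mathrm{Int}(\mathbf{D_1})$ is a ball; it then tubes $B$ to $\infty$ along an arc in $B_1\setminus B$ to obtain a canonical ball $B^*$ whose ``outer'' face is parallel to $\partial\mathbf{D_1}\setminus D_0$, does the same for $\tilde B$, and compares both to $B^*$. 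This sidesteps any direct comparison of $D$ with $\tilde D$.

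The real problem is Step 1. Your surgeries replace $\tilde D$ by a new disk $\tilde D_{new}$ and hence $\tilde B$ by a new ball $\tilde B_{new}$; your collar-swap then produces $f:(B\cup\mathbf{D_1},U'_Q)\to(\tilde B_{new}\cup\mathbf{D_1},U'_Q)$, not a map to the \emph{original} $\tilde B\cup\mathbf{D_1}$. Writing ``we may assume $D\cap\tilde D=\partial D_0$'' presumes that each surgery yields a homeomorphism $(\tilde B\cup\mathbf{D_1},U'_Q)\cong(\tilde B_{new}\cup\mathbf{D_1},U'_Q)$ fixed on $U'_Q$, but you never establish this. For that you would need the $3$-ball $W$ bounded by $D_C\cup\tilde D_C$ to miss $U'_Q$; one can check $W\cap\mathbf{D_1}=\emptyset$, but $W\cap|Q|=\emptyset$ is not obvious. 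Your parenthetical justification, that ``$|Q|$ lies entirely on one side of the sphere $D_C\cup\tilde D_C$'', is exactly the point at issue: $|Q|$ need not be connected (it may contain elements of $K_1^0$ that do not touch $D_0$), and since $C$ is innermost on $D$ but not necessarily on $\tilde D$, the disk $\tilde D_C$ can meet $D$ again, so $W$ can straddle $\partial B$. Even granting your claim, choosing the push-off so that $\tilde B_{new}\supset|Q|$ only shows $\tilde B_{new}$ satisfies the same hypotheses as $\tilde B$; it does not by itself give the required homeomorphism back to $\tilde B$. This is precisely the subtlety the paper flags (``in $\mathbb{R}^3$ the homeomorphism is in general not an isotopy''), and its $\mathbb{S}^3$ argument is designed to avoid it.
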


If $\mathbb{R}^3$ is replaced by $\mathbb{S}^3$, this lemma is trivial and the homeomorphism is an isotopy. However in $\mathbb{R}^3$, the homeomorphism is in general not an isotopy and this lemma needs to be verified with a caution. We put the proof in the Appendix.

\begin{proof}[Proof of Sub-criterion \ref{prop:boundaryconnectedsum}]
	Assume $Q$ is compressible for $U'$ with compressing disk $D_Q$. Then $\partial D_Q$ bounds a disk $D_0$ on $D_1^+$ and the sphere $D_Q\cup D_0$ bounds a 3-ball $B\subset\mathbb{R}^3$. As $Q$ is split, there exists a properly embedded disk $D$ in $B$ which divides $Q$ into $Q_1$ and $Q_2$. Moreover, by an isotopy of $D$ in $B$, we may assume $\partial D$ consists of an arc on $D_0$ and another one on $D_Q$. Then the union of $D$ and half of $D_Q$ is a compressing disk of $Q_1$ for $U'$, a contradiction.
\end{proof}

Sub-criterion \ref{cor:incompressibility} follows immediately from the following proposition.

\begin{prop}\label{prop:trivializedsubstitution}
	With the notation in the paragraph before Sub-criterion \ref{cor:incompressibility}, the quasi-tangle $Q$ is compressible for $U'$ if and only if there is an isotopy $H : U'\times[0,1]\to  \mathbb{R}^3$ between $U'$ and the trivial substitution $\overline{U'-|Q|} \cup h(|Q|)$, such that $H(\cdot,0)=\mathrm{id}_{U'}$, and $H(\cdot,1)$ coincides with identity on $(U'-( |Q|\cup \mathbf{D_1}))\cup D_1^-$ and $h$ on $|Q|$.
\end{prop}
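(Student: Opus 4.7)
The plan is to prove each direction of the biconditional separately: the forward direction $(\Rightarrow)$ converts a compressing $3$-ball into the required isotopy of $U'$, while the reverse direction $(\Leftarrow)$ pulls the trivial ball $B_T$ back under an ambient extension of $H$ to produce a compressing ball.

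For the forward direction, suppose $Q$ is compressible for $U'$, witnessed by a $3$-ball $B^\ast$ with $\partial B^\ast = D_C \cup D_Q$, $D_C \subset D_1^+$, $\mathrm{Int}(D_Q) \subset \mathbb{R}^3 - U'$, and $\overline{\mathrm{Int}(B^\ast) \cap U'} = |Q|$. Since $B^\ast$ meets the defining conditions of the ambient ball for the pair $(B,Q)$ from the paragraph before Definition \ref{def:unsplit}, Lemma \ref{lem:tangleball} supplies a homeomorphism $f : (B \cup \mathbf{D_1}, U'_Q) \to (B^\ast \cup \mathbf{D_1}, U'_Q)$ fixing $U'_Q$ pointwise. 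The composition $g \triangleq h \circ f^{-1} : B^\ast \to B_T$ then sends $D_C$ to $D_T$ and restricts to $h$ on $|Q|$. I would next select an embedded arc $\gamma$ from $B^\ast$ to $B_T$ inside the open set $\mathbb{R}^3 - ((U' - (|Q| \cup \mathbf{D_1})) \cup D_1^-)$, together with a regular neighborhood $N$ of $B^\ast \cup \gamma \cup B_T$ inside this same open set. Such an $N$ exists because both $B^\ast$ and $B_T$ meet $U'$ only in $|Q|$ and boundary disks on $D_1^+$, leaving room on the $D_1^+$-side of $\mathbf{D_1}$ to route $\gamma$. Inside $N \cong D^3$, realize $g$ as the time-$1$ map of an isotopy fixing $\partial N$ pointwise; extension by the identity on $\mathbb{R}^3 - N$ yields an ambient isotopy $\tilde H_t$, and $H \triangleq \tilde H|_{U' \times [0,1]}$ fulfils the prescribed end conditions.

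For the reverse direction, suppose the isotopy $H$ is given. By the isotopy extension theorem, extend $H$ to an ambient isotopy $\tilde H : \mathbb{R}^3 \times [0,1] \to \mathbb{R}^3$ with $\tilde H_0 = \mathrm{id}$, chosen so that $\tilde H_1$ preserves $D_1^+$ set-wise (possible because $\tilde H_1$ may be taken to map the collar $\mathbf{D_1} \cong D \times [0,1]$ to itself, using the fact that $\partial D_1^+$ lies in $U' - (|Q| \cup \mathbf{D_1})$ on which $\tilde H_1$ is the identity). Set $B^\ast \triangleq \tilde H_1^{-1}(B_T)$, $D_C \triangleq \tilde H_1^{-1}(D_T)$, and $D_Q \triangleq \tilde H_1^{-1}(\overline{\partial B_T - D_T})$. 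Then $B^\ast$ is a $3$-ball with boundary $D_C \cup D_Q$, and $|Q| \subset \mathrm{Int}(B^\ast)$ since $\tilde H_1|_{|Q|} = h$ lands in $\mathrm{Int}(B_T)$. The chosen arrangement gives $D_C \subset D_1^+$ and $\mathrm{Int}(D_Q) \subset \mathbb{R}^3 - U'$, while $B_T \cap U'' \subset D_T \cup h(|Q|)$ combined with $\tilde H_1$ acting as identity on $(U' - (|Q| \cup \mathbf{D_1})) \cup D_1^-$ forces $\overline{\mathrm{Int}(B^\ast) \cap U'} = |Q|$, exhibiting $Q$ as compressible.

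The principal obstacle is the forward direction, concentrated in two technical steps: first, invoking Lemma \ref{lem:tangleball} to ensure the identification $g$ of $B^\ast$ with $B_T$ restricts to $h$ (and not merely to some homeomorphism sending $|Q|$ to $h(|Q|)$); and second, producing the supporting neighborhood $N$ disjoint from $(U' - (|Q| \cup \mathbf{D_1})) \cup D_1^-$. The latter exploits the collar structure $\mathbf{D_1} \cong D \times [0,1]$, which cleanly separates the $D_1^+$-side of $\mathbf{D_1}$ where both $B^\ast$ and $B_T$ live, from the $D_1^-$-side that must remain fixed. A smaller but genuine subtlety in the reverse direction is justifying that the ambient extension can be taken to preserve $D_1^+$; this amounts to an isotopy-extension argument relative to the collar neighborhood of $\mathbf{D_1}$ on which $H$ already prescribes identity values on $D_1^-$.
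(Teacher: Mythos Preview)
Your proposal is correct and follows essentially the same strategy as the paper: in both directions the argument is ambient-isotopy extension combined with pushing forward or pulling back the ball $B_T$. The paper's proof is terser; in particular, for the point you flag as the ``principal obstacle'' (ensuring $H(\cdot,1)|_{|Q|}=h$), the paper bypasses Lemma~\ref{lem:tangleball} entirely by first isotoping $B_Q\cup U'$ onto $B_T\cup\overline{U'-|Q|}$ and then correcting the restriction using the fact that every orientation-preserving self-homeomorphism of $(B_T,D_T)$ is isotopic to the identity. One small caution in your route: Lemma~\ref{lem:tangleball} as stated compares two balls sharing the \emph{same} base disk $D_0$, whereas your compressing ball $B^\ast$ has base $D_C\subset D_0$; you should first enlarge $B^\ast$ along $D_1^+$ (or shrink $D_0$) so the hypothesis is met, which is harmless since $(D_0-D_C)\cap|Q|=\emptyset$.
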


\begin{proof}
	If such an isotopy exists, extend it to an ambient isotopy $\mathscr{H}:\mathbb{R}^3\times [0,1]\to\mathbb{R}^3$. As $U'$ is isomorphic to $(U'-|Q|) \cup h(|Q|)$, we see $(\mathscr{H}(\cdot,1))^{-1}(\partial B_T-{\rm Int}(D_T))$ is a compressing disk of $Q$ in the complement of $U'$.
	
	For the ``only if" part, we may assume $Q$ is compressible for $U'$ with compressing disk $D_Q$. Let $B_Q$ be the 3-ball containing $|Q|$, bounded by $D_Q$ and part of $D_1^+$. Then $B_Q\cup U'$ and $B_T\cup\overline{U'-|Q|}$ are isotopic. Any orientation-preserving homeomorphism of $(B_T, D_T)$ is isotopic to identity and the isotopy can be extended. Thus there is an isotopy from $U'$ to the trivial substitution. Moreover, each isotopy involved above can be made with respect to all the restrictions.
\end{proof}

\begin{proof}[Proof of Sub-criterion \ref{prop:incrediblecircle}]
	Suppose $C,C'$ are both incredible circles of $Q$ in $U'$ and $C'$ is not isotopic to $C$ on $D_1^+$. Minimize the cardinality of $C \cap C'$ with an isotopy of $C'$ on $D_1^+$. As $C$ and $C'$ are not isotopic, now they still intersect.
	Let $D,D'$ be compressing disks in the complement of $U'$ with $\partial D=C,\partial D'=C'$. The intersection $D \cap D'$ consists of finitely many circles and arcs. The circle components can be eliminated from an innermost one on $D$, by surgery of $D$ along a subdisk in $D'$. So we suppose $D\cap D'$ is a disjoint union of finitely many arcs with endpoints in $C\cap C'$.

	Now choose one of these arcs, say $\alpha$, such that $D-\alpha$ has a component not intersecting $D'$. Denote the closure of this component by $D_\alpha$. Also, $D'-\alpha$ is a disjoint union of two disks, and we denote their closure by $D'_\alpha, \tilde{D}'_\alpha $.
	Now $\partial D_\alpha \cup \partial D'_\alpha -{\rm Int} (\alpha)$ is a circle  and bounds a disk $D_0$ on $D_1^+$. Let $B$ be the 3-ball bounded by the sphere $D_0\cup D_\alpha\cup D'_\alpha$. If $L \cap D_0=\emptyset$, we can push $D_\alpha$ across $D'_\alpha$ to eliminate $\alpha$, and the cardinality of $C \cap C'$ is reduced by 2, which contradicts our assumption. Therefore, $L\cap D_0\neq\emptyset$, and there is a quasi-tangle $Q_\alpha$ with $|Q_\alpha |\subset B\cap U'$ which is compressible for $U'$. Without loss of generality, we can assume $Q_\alpha\cap Q=\emptyset$. In fact, if $Q_\alpha\cap Q \neq\emptyset$, then $Q_\alpha=Q$ for $Q$ is unsplit, and we can replace $D'_\alpha$ in our analysis by the other disk $\tilde{D}'_\alpha$. So $Q_\alpha$ is compressible for $U'-|Q|$, which is a contradiction.
\end{proof}

We conclude this subsection with some basic facts on Brunnian links, which will be used in the next section.

\begin{lem}\label{lem:2pts}
	Let $L$ be a link and $D$ be a disk with $D \cap L =\{ p_1 , p_2 \}$. If there is another embedded  disk $D'$ with $\partial D = \partial D'$ and $D' \cap L =\emptyset$, then there exists an embedded sphere $S^2$ such that $S^2 \cap L =\{ p_1 , p_2 \}$.
\end{lem}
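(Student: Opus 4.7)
The plan is a standard innermost-disk argument. After a small isotopy of $D'$ rel $\partial D' = \partial D$, supported away from $L$, I may assume the interiors of $D$ and $D'$ meet transversely, so that $\mathrm{Int}(D) \cap \mathrm{Int}(D')$ is a disjoint union of finitely many circles $\gamma_1, \dots, \gamma_n$. I would proceed by induction on $n$. In the base case $n = 0$, the union $D \cup D'$ is a topologically embedded sphere (the crease along $\partial D$ can be smoothed), and it meets $L$ in exactly $\{p_1, p_2\}$, so the lemma holds.

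For the inductive step, pick $\gamma$ innermost on $D'$, so $\gamma$ bounds a subdisk $\Delta' \subset D'$ with $\mathrm{Int}(\Delta') \cap D = \emptyset$; let $\Delta \subset D$ be the subdisk bounded by $\gamma$. Then $S' = \Delta \cup \Delta'$ is an embedded $2$-sphere (with smoothable crease along $\gamma$), and since $D' \cap L = \emptyset$ we have $S' \cap L = \Delta \cap L \subseteq \{p_1, p_2\}$. I would then case on $|S' \cap L|$. If $S' \cap L = \{p_1, p_2\}$, then $S'$ is the sphere asked for and we are finished. The case $|S' \cap L| = 1$ cannot occur, because $S'$ bounds a $3$-ball in $\mathbb{R}^3$ and each closed component of $L$ must meet the boundary of that ball in an even number of transverse points. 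In the remaining case $\Delta \cap L = \emptyset$, I would surger $D$ by replacing $\Delta$ with a small normal pushoff of $\Delta'$ into the ball bounded by $S'$; this produces a new disk $\tilde D$ with $\partial \tilde D = \partial D$, $\tilde D \cap L = \{p_1, p_2\}$, and strictly fewer than $n$ circles in $\mathrm{Int}(\tilde D) \cap \mathrm{Int}(D')$. The inductive hypothesis applied to $\tilde D$ and $D'$ then concludes.

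The main point requiring care is the last surgery: one must verify that a sufficiently small normal pushoff of $\Delta'$ into the correct side of $S'$ stays embedded, disjoint from $D' \setminus \mathrm{Int}(\Delta')$, and disjoint from $L$. This should follow from the innermost choice of $\gamma$ on $D'$ (which gives a collar of $\Delta'$ in $D'$ containing no other intersection circle), together with $\Delta' \cap L = \emptyset$ and compactness of $L$, so although some bookkeeping is required the argument is routine once the three cases above are separated. The conceptual crux of the proof is really the parity observation that rules out the $|S' \cap L| = 1$ case, which is what allows the induction to close.
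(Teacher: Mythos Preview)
Your proposal is correct and follows essentially the same innermost-circle argument as the paper: both pick a circle of $D\cap D'$ innermost on $D'$, use parity to rule out the case that the subdisk on $D$ contains exactly one of $p_1,p_2$, surger away the circle when it contains neither, and form the sphere when it contains both. The only cosmetic difference is that the paper first eliminates all circles not enclosing $\{p_1,p_2\}$ and then selects a circle innermost on $D$ to build the sphere, whereas you fold everything into a single induction with the base case $n=0$ giving $D\cup D'$; these are the same argument organized slightly differently.
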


\begin{proof}
	Take a small regular neighbourhood $N$ of $L$. Disturb $D'$ such that $D\cap D' \cap N = \emptyset$. Then $D \cap D'$ is a disjoint union of finitely many circles. Let $C$ be an intersection circle innermost on $D'$, i.e., the disk $D'_C\subset D'$ bounded by $C$ satisfies ${\rm Int}(D'_C)\cap D=\emptyset$. We see that $C$ also bounds a disk $D_C$ on $D$, and if one of $p_1,p_2$ is in $D_C$, then both are in $D_C$. If $p_1,p_2\notin D_C$, we can replace $D$ by $(D-D_C)\cup D'_C$ and disturb it a little to eliminate the intersection circle $C$. In this way, finally for every circle $C\subset D\cap D'$, we have $p_1,p_2\in D_C$. Then we choose the circle $C_0\subset D\cap D'$, such that ${\rm Int}(D_{C_0})\cap D'=\emptyset$. The union $ D_{C_0}\cup D'_{C_0}$ is a sphere we are seeking.
\end{proof}

\begin{lem}\label{lem:connectedsum}
	Brunnian links are prime. That is, there are no nontrivial connected sum decompositions for them.
\end{lem}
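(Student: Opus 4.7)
The plan is to argue by contradiction. Let $L$ be Brunnian and suppose $S$ is an embedded $2$-sphere with $S\cap L=\{p_1,p_2\}\subset C_1$. Let $B_{\mathrm{in}}$ be the closed $3$-ball bounded by $S$ and $B_{\mathrm{out}}=\overline{\mathbb{R}^3-B_{\mathrm{in}}}$; set $\alpha_1=C_1\cap B_{\mathrm{in}}$, $\alpha_2=C_1\cap B_{\mathrm{out}}$, $I=\{i>1:C_i\subset B_{\mathrm{in}}\}$, and $J=\{j>1:C_j\subset B_{\mathrm{out}}\}$. The goal is to show that $S$ is a trivial decomposing sphere, i.e.\ that one side meets $L$ in only a trivial arc. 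Since every component of a Brunnian link is unknotted, $C_1$ is an unknot; primality of the unknot as a knot (any sphere meeting it in two points splits it into two trivial arcs) then forces both ball-arc pairs $(B_{\mathrm{in}},\alpha_1)$ and $(B_{\mathrm{out}},\alpha_2)$ to be trivial, so if either $I$ or $J$ is empty there is nothing to prove.

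Thus assume both $I$ and $J$ are nonempty, and aim for a contradiction by producing a disk $D_1$ bounded by $C_1$ with $D_1\cap(L-C_1)=\emptyset$; Proposition~\ref{prop:basic} will then force $L$ to be trivial, contradicting (NT). A preliminary observation is that every proper sublink of a Brunnian link is trivial, which follows by iterating (ST): any $k$-component sublink with $k<n$ sits inside some $(n-1)$-component sublink, and disjoint spanning disks of the latter restrict to disjoint spanning disks of the former. Because $J\neq\emptyset$, the sublink $C_1\cup\bigcup_{i\in I}C_i$ is proper, hence trivial, so $C_1$ bounds a disk $\Delta_I$ in $\mathbb{R}^3-\bigcup_{i\in I}C_i$; similarly $C_1$ bounds a disk $\Delta_J$ in $\mathbb{R}^3-\bigcup_{j\in J}C_j$.

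Next I simplify $\Delta_I\cap S$ by the standard innermost-circle cut-and-paste, performed inside the complement of $\bigcup_{i\in I}C_i$. Since $\partial\Delta_I\cap S=\{p_1,p_2\}$, this intersection consists of finitely many circles together with exactly one arc $\beta$ from $p_1$ to $p_2$; each such circle is disjoint from $\beta$ on $S$, so it lies in one of the two open disks of $S-\beta$, and an innermost one on $\Delta_I$ bounds a disk on $S$ missing $L\cap S=\{p_1,p_2\}$, which can be surgered away without meeting $\bigcup_{i\in I}C_i$. After all circles are removed, $\Delta_I\cap S=\beta$ and $\Delta_I^{\mathrm{in}}=\Delta_I\cap B_{\mathrm{in}}$ is a disk in $B_{\mathrm{in}}$ bounded by $\alpha_1\cup\beta$ with $\Delta_I^{\mathrm{in}}\cap(L-C_1)=\emptyset$. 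The same procedure applied to $\Delta_J$ yields a disk $\Delta_J^{\mathrm{out}}\subset B_{\mathrm{out}}$ bounded by $\alpha_2\cup\beta'$ and disjoint from $L-C_1$, for some arc $\beta'\subset S$ from $p_1$ to $p_2$.

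Finally, any two arcs on the sphere $S$ from $p_1$ to $p_2$ are isotopic rel endpoints, so a small such isotopy, extended to a thin collar of $S$ in $B_{\mathrm{out}}$ disjoint from $\bigcup_{j\in J}C_j$ and thereby moving $\Delta_J^{\mathrm{out}}$, arranges $\beta\cap\beta'=\{p_1,p_2\}$. Then $\beta\cup\beta'$ is a simple closed curve on $S$ bounding a disk $D_S\subset S$ whose interior is disjoint from $L$, and $D_1=\Delta_I^{\mathrm{in}}\cup D_S\cup\Delta_J^{\mathrm{out}}$ (smoothed at the corners $p_1,p_2$) is an embedded disk with $\partial D_1=C_1$ and $D_1\cap(L-C_1)=\emptyset$, delivering the contradiction. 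The main obstacle is the innermost-circle simplification in the previous step, where one must verify the cut-and-paste stays clear of the forbidden components; this is automatic precisely because each intersection circle on $S$ is disjoint from $\beta$ (respectively $\beta'$) and therefore bounds a disk on $S$ missing both punctures $p_1,p_2$.
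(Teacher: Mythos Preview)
Your argument is correct and close in spirit to the paper's, though the execution differs. The paper caps off each side of $S$ with an arc $\alpha\subset S$ to form links $L_1,L_2$, and observes that replacing $\alpha$ by the opposite subarc of $C_1$ (which is unknotted rel endpoints in its ball, since the unknot is prime) exhibits each $L_s$ as isotopic to a proper sublink of $L$; hence each $L_s$ is a trivial link lying in a ball and so bounds disjoint disks there, and these assemble to trivialize $L$. You instead build a single spanning disk for $C_1$ by cutting disks $\Delta_I,\Delta_J$ along $S$ and gluing the two halves across a bigon on $S$, then invoke Lemma~\ref{lem:basic}. Both routes rest on the same ingredients---(ST) for proper sublinks and primality of the unknot---but yours is a more hands-on cut-and-paste, while the paper's is shorter and more conceptual.

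One small technical slip: in your innermost-circle step you should choose $c$ innermost on $S$ (among the circle components of $\Delta_I\cap S$, in $S\setminus\beta$), not innermost on $\Delta_I$. Innermost on $\Delta_I$ gives a subdisk $D_c\subset\Delta_I$ whose interior misses $S$, but the disk $E_c\subset S$ bounded by $c$ may still contain other circles of $\Delta_I\cap S$, so the swap $D_c\mapsto E_c$ can create self-intersections. Taking $c$ innermost on $S$ ensures $\mathrm{int}(E_c)\cap\Delta_I=\emptyset$ and the surgery goes through; since $E_c\subset S$ avoids $\{p_1,p_2\}=L\cap S$, the new disk still misses $\bigcup_{i\in I}C_i$. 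The rest of your argument is unaffected.
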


\begin{proof}
	Let $L$ be a Brunnian link, and $S^2$ is an embedded sphere in $\mathbb{R}^3$ that intersects $L$ at only two points $p_1,p_2$. Up to isotopy, there is only one way to connect $p_1,p_2$ by an arc on $S_2$. Let $\alpha$ be the arc and $B$ denote the closed 3-ball bounded by $S^2$. To use proof by contradiction, suppose neither of $L_1\triangleq \alpha\cup L\cap B$ and $L_2\triangleq \alpha\cup (L- B)$ is an unknot. Then each of $L_1,L_2$ has less components than $L$.
	Note that $p_1,p_2$ must on the same component of $L$. Denote the component by $C$. As $C$ is a trivial knot, $C_1\triangleq \alpha\cup C\cap L_1, C_2\triangleq \alpha\cup C\cap L_2$ are also trivial. Therefore, $L_1$ is isotopic to the link $(L_1-C_1)\cup C$, which is a proper sublink of $L$. Thus $L_1$ bounds disjoint disks in $B$ and similarly, $L_2$ bounds disjoint disks in $\mathbb{R}^3-{\rm Int}(B)$. Now we see that $L$ bounds disjoint disks in $\mathbb{R}^3$, which is a contradiction.
\end{proof}

\begin{prop}\label{prop:2pts}
	Let $L=\cup_{i=1}^n C_i$ be a Brunnian link. Assume there is a circle $C\subset \mathbb{R}^3-L$ that bounds a disk $D$ with $D \cap L = D \cap C_1 =\{ p_1 , p_2 \}$. Suppose $\alpha\subset D$ is an arc connecting $p_1,p_2 $, and the two components of $C_1-\{p_1,p_2\}$ combined with $\alpha$ form circles $C'_1$ and $C''_1$ respectively. If neither $(L-C_1)\cup C'_1$ nor $(L-C_1)\cup C''_1$ is isotopic to $L$, then any disk bounded by $C$ intersects $L$ at two or more points.
\end{prop}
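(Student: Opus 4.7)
The plan is to argue by contradiction. Assume there is an embedded disk $D'$ with $\partial D' = C$ and $|D' \cap L| \le 1$; the goal is to derive a contradiction from each of the two possibilities $|D' \cap L| = 1$ and $D' \cap L = \emptyset$.

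The case $|D' \cap L| = 1$ is dispatched by a linking-number comparison. Since $\partial D = \partial D' = C$ and $C$ is disjoint from $L$, both $D$ and $D'$ compute the linking numbers $\mathrm{lk}(C, C_i)$. From $D$, only $C_1$ contributes, giving $\mathrm{lk}(C, C_1) \in \{-2, 0, 2\}$ according to the signs of $p_1, p_2$, and $\mathrm{lk}(C, C_i) = 0$ for $i \neq 1$. From $D'$, exactly one linking number equals $\pm 1$ and the rest vanish. These two descriptions are incompatible, ruling out this sub-case.

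For $D' \cap L = \emptyset$, I would first apply Lemma \ref{lem:2pts} to the pair $D, D'$ to produce an embedded sphere $\Sigma$ with $\Sigma \cap L = \{p_1, p_2\}$; moreover, inspection of that proof shows $\Sigma = D_0 \cup D'_0$ where $D_0 \subset D$ is a sub-disk containing both $p_1, p_2$. Passing to $S^3$, the sphere $\Sigma$ exhibits $L$ as a connected sum along $C_1$ of two factor links $L_1, L_2$, formed by closing the arcs $\beta_1 \triangleq C_1 \cap B_1$ and $\beta_2 \triangleq C_1 \cap B_2$ with a common arc $\alpha_\Sigma \subset \Sigma$ that we may take to lie inside $D_0$. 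Since $L$ is Brunnian, Lemma \ref{lem:connectedsum} forces one of the factors, say $L_1$, to be trivial; its components thus bound disjoint disks in $B_1$, and the one bounded by $\beta_1 \cup \alpha_\Sigma$ furnishes an ambient isotopy, disjoint from $L - C_1 - \beta_1$, that carries $\beta_1$ to $\alpha_\Sigma$. The resulting link is $(L - C_1) \cup (\alpha_\Sigma \cup \beta_2)$. Finally, because $\alpha_\Sigma, \alpha \subset D$ and $D \cap (L - C_1) = \emptyset$, an isotopy in $D$ rel $\{p_1, p_2\}$ carrying $\alpha_\Sigma$ to $\alpha$ extends to an ambient isotopy in $\mathbb{R}^3 - (L - C_1)$, producing $(L - C_1) \cup C''_1$ (or $(L - C_1) \cup C'_1$, depending on which factor was trivial), contradicting the hypothesis.

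The main obstacle is the zero-intersection case: one must track the arc $\alpha_\Sigma$ produced by Lemma \ref{lem:2pts} carefully enough to perform both the ``triviality'' isotopy inside $B_1$ and the final deformation on $D$ without disturbing the other components of $L$; this rests on the disjointness of the disks bounded by an unlink and on the fact that $D$ is disjoint from $L - C_1$.
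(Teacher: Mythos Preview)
Your proposal is correct and follows essentially the same route as the paper's proof: rule out the odd-intersection case by linking numbers, then in the empty-intersection case apply Lemma~\ref{lem:2pts} to obtain a sphere meeting $L$ exactly at $\{p_1,p_2\}$ and invoke primality (Lemma~\ref{lem:connectedsum}) to force one of $(L-C_1)\cup C'_1$, $(L-C_1)\cup C''_1$ to be isotopic to $L$. The paper compresses your two linking-number sub-cases into the single dichotomy $|lk(C,C_1)|=2$ versus $lk(C,C_1)=0$, and it leaves implicit the detail you spell out---namely that the sphere from Lemma~\ref{lem:2pts} contains a sub-disk of $D$ through $p_1,p_2$, so the connecting arc on the sphere may be isotoped to $\alpha$ in $D$ rel $\{p_1,p_2\}$ without touching $L-C_1$.
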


\begin{proof}
	If the linking number $lk(C, C_1 )$ equals 2, the conclusion holds. Otherwise $lk(C, C_1 )=0$.
	Suppose $C$ bounds a disk $D'$ with $D' \cap L =\emptyset$, then by Lemma \ref{lem:2pts}, there is an embedded sphere $S^2$ such that $S^2 \cap L =\{ p_1 , p_2 \}$. By Lemma \ref{lem:connectedsum}, either $(L-C_1)\cup C'_1$ or $(L-C_1)\cup C''_1$ is isotopic to $L$.
\end{proof}

\section{Series of large experiments}\label{sect:baas}

Nils A. Baas et al. constructed great many families of Brunnian links in \cite{BCS, BS}, using basic block to synthesis large Brunnian links like surfaces (see Fig. \ref{fig:Baasblock}), which generalize Brunn's chain. To show they are Brunnian, Circle-method will demonstrate significant advantages, compared with the original proofs by HOMFLY polynomial. We deal with two typical series in their construction:

\begin{figure}[htbp]
	\centering
	\includegraphics[height=2cm]{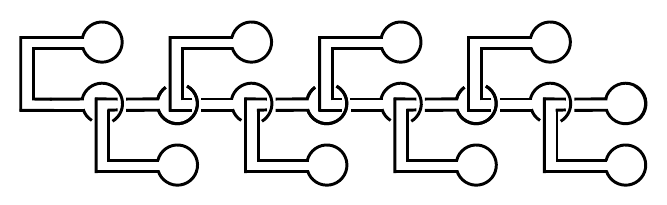}
	\caption{Baas' block}
	\label{fig:Baasblock}
\end{figure}

\textbf{Tube$(m,n)$} where $m,n \in \mathbb{N}^*$: a tube with $m$ rows and $n$ columns as shown in Fig. \ref{fig:Baas2rows} (1) and \ref{fig:Baas3rows} (1).

\textbf{Carpet$(m,n,p)$} where $m,n,p \in \mathbb{N}^*$ and $m<n$: a flat regular $p$-gon annulus in $(n-m+1)$ tiers where the innermost and outermost tiers have $mp$ and $np$ components respectively (Fig. 15 in section 3.3 in \cite{BCS}). This link has $\frac{(m+n)(n-m+1)}{2} p$ components.

We say a subset of $\mathbb{R}^3$ is \emph{split} if there is an embedded sphere in its complement that separates it into two proper subspaces. Brunnian links are not split.

\begin{example}\label{example:Baas3rows} Tube$(3,4)$, Fig. \ref{fig:Baas3rows}(1).

	\begin{figure}[htbp]
		\centering
		\ifpdf
		\setlength{\unitlength}{1bp}%
		\begin{picture}(233.32, 207.36)(0,0)
		\put(0,0){\includegraphics{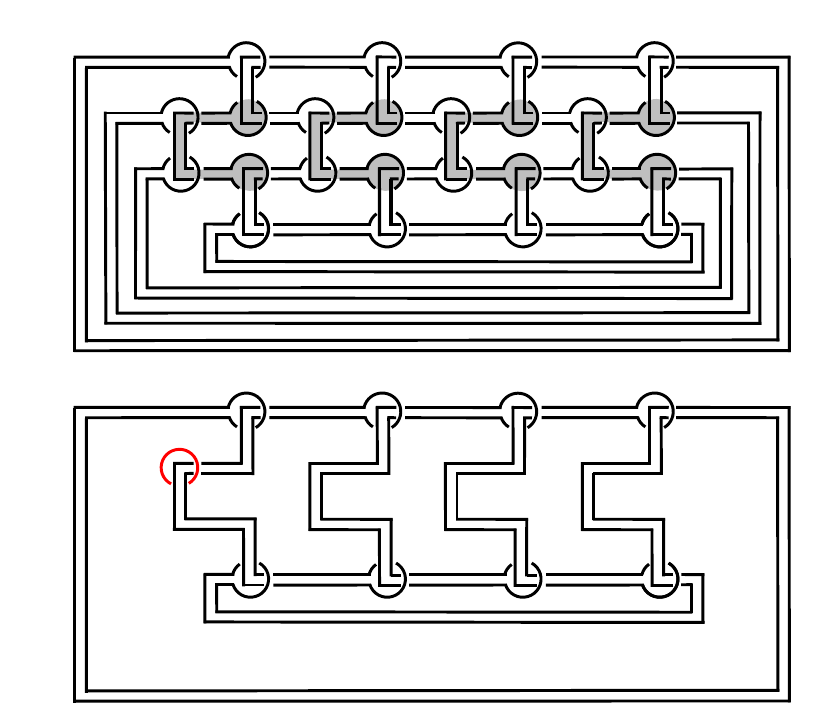}}
		\put(37.76,176.54){\fontsize{10.35}{12.42}\selectfont $a$}
		\put(37.54,193.61){\fontsize{10.35}{12.42}\selectfont $b$}
		\put(5.67,145.96){\fontsize{10.35}{12.42}\selectfont (1)}
		\put(5.67,44.85){\fontsize{10.35}{12.42}\selectfont (2)}
		\end{picture}%
		\else
		\setlength{\unitlength}{1bp}%
		\begin{picture}(233.32, 207.36)(0,0)
		\put(0,0){\includegraphics{Baas3rows}}
		\put(37.76,176.54){\fontsize{10.35}{12.42}\selectfont $a$}
		\put(37.54,193.61){\fontsize{10.35}{12.42}\selectfont $b$}
		\put(5.67,145.96){\fontsize{10.35}{12.42}\selectfont (1)}
		\put(5.67,44.85){\fontsize{10.35}{12.42}\selectfont (2)}
		\end{picture}%
		\fi
		\caption{\label{fig:Baas3rows}%
			Tube$(3,4)$}
	\end{figure}

	Take the test disks colored in gray and denote the test complex by $U$. By symmetry, there are only two kinds of ears, $a$ and $b$, whose endpoints lie on the ``back'' side and the ``front'' side respectively. If we delete all the eight ears of kind $a$ from $U$, we obtain a test subcomplex $U'$. And $U'$ has a subset, the black part of the lower figure, which is a Brunn's chain. So $U'$ is not split. Meanwhile, the trivial substitution of $b$ for $U'$ is split. Thus $b$ is incompressible for $U'$ by Sub-criterion \ref{cor:incompressibility}. The union of $a$ and any arc connecting its endpoints on the disk, is isotopic to the red circle in the complement of the Brunn's chain, which can not bound a disk according to Proposition \ref{prop:2pts}. So $a$ is also incompressible.
\end{example}

\begin{example}\label{example:Baas2rows} Tube$(2,3)$, Fig. \ref{fig:Baas2rows}(1).

	\begin{figure}[htbp]
		\centering
		\ifpdf
		\setlength{\unitlength}{1bp}%
		\begin{picture}(325.81, 171.55)(0,0)
		\put(0,0){\includegraphics{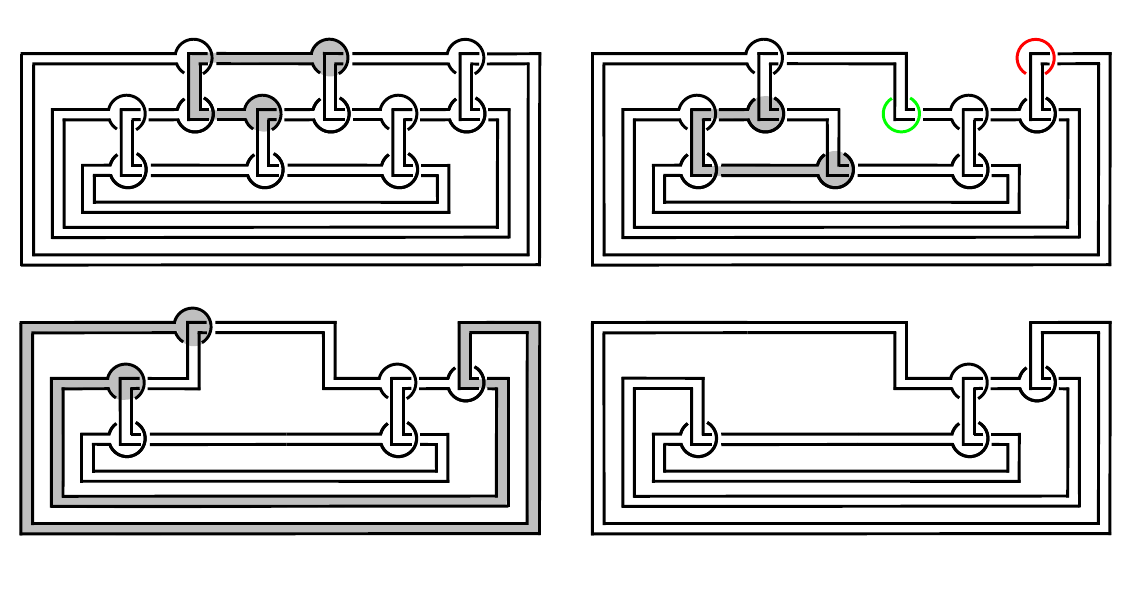}}
		\put(113.64,157.80){\fontsize{10.35}{12.42}\selectfont $a$}
		\put(106.14,143.76){\fontsize{10.35}{12.42}\selectfont $b$}
		\put(84.12,141.58){\fontsize{10.35}{12.42}\selectfont $c$}
		\put(103.96,125.61){\fontsize{10.35}{12.42}\selectfont $d$}
		\put(76.68,86.61){\fontsize{10.35}{12.42}\selectfont $(1)$}
		\put(240.96,86.61){\fontsize{10.35}{12.42}\selectfont $(2)$}
		\put(240.96,7.90){\fontsize{10.35}{12.42}\selectfont $(4)$}
		\put(76.68,7.90){\fontsize{10.35}{12.42}\selectfont $(3)$}
		\end{picture}%
		\else
		\setlength{\unitlength}{1bp}%
		\begin{picture}(325.81, 171.55)(0,0)
		\put(0,0){\includegraphics{Baas2rows}}
		\put(113.64,157.80){\fontsize{10.35}{12.42}\selectfont $a$}
		\put(106.14,143.76){\fontsize{10.35}{12.42}\selectfont $b$}
		\put(84.12,141.58){\fontsize{10.35}{12.42}\selectfont $c$}
		\put(103.96,125.61){\fontsize{10.35}{12.42}\selectfont $d$}
		\put(76.68,86.61){\fontsize{10.35}{12.42}\selectfont $(1)$}
		\put(240.96,86.61){\fontsize{10.35}{12.42}\selectfont $(2)$}
		\put(240.96,7.90){\fontsize{10.35}{12.42}\selectfont $(4)$}
		\put(76.68,7.90){\fontsize{10.35}{12.42}\selectfont $(3)$}
		\end{picture}%
		\fi
		\caption{\label{fig:Baas2rows}%
			Tube$(2,3)$}
	\end{figure}

	To show the Brunnian property of the link (1), we first claim that the black part of (2), denoted $L_{(2)}$, is a Brunnian link. Then take the gray test disk in (1) and denote the test complex by $U$. By Sub-criterion \ref{cor:incompressibility}, we only consider the unknotted ears $a,b,c,d$. Take test subcomplex $U'=U-a-c$, which takes $L_{(2)}$ as a subset. As we have claimed, $L_{(2)}$ is Brunnian thus not split, which implies $U'$ is neither split. The incompressibility of $b,d$ follows Sub-criterion \ref{cor:incompressibility} and the fact that the trivial substitution of either $b$ or $d$ for $U'$ is split. For ear $a$, take an arbitrary arc connecting $\partial a$ on the disk. The union of $a$ and the arc is isotopic to the red circle in the complement of $L_{(2)}$, which can not bound a disk in the complement of $L_{(2)}$ according to Proposition \ref{prop:2pts}. Therefore $a$ is incompressible for $U'$. Similarly, $c$ is incompressible (see the green circle). Now we only need to prove our claim that $L_{(2)}$ is Brunnian. Similarly, take the gray test disk in (2) and it suffices to show the link (3) is Brunnian. And we finally reduce to that the link (4) is Brunnian, which is a Brunn's chain with 2 components.
\end{example}

Similarly to Example \ref{example:Baas2rows} and \ref{example:Baas3rows}, one can verify the Brunnian property for Tube$(m,n)$, Brunnian annulus and Brunnian torus of any rows and columns (Fig. 12, 13 and 14 in section 3.3 in \cite{BCS}).

\begin{example}\label{example:Baas3sheaves} Carpet$(1,3,4)$, Fig. \ref{fig:Baas3sheaves}(1). 	

	\begin{figure}[htbp]
		\centering
		\ifpdf
		\setlength{\unitlength}{1bp}%
		\begin{picture}(299.53, 459.20)(0,0)
		\put(0,0){\includegraphics{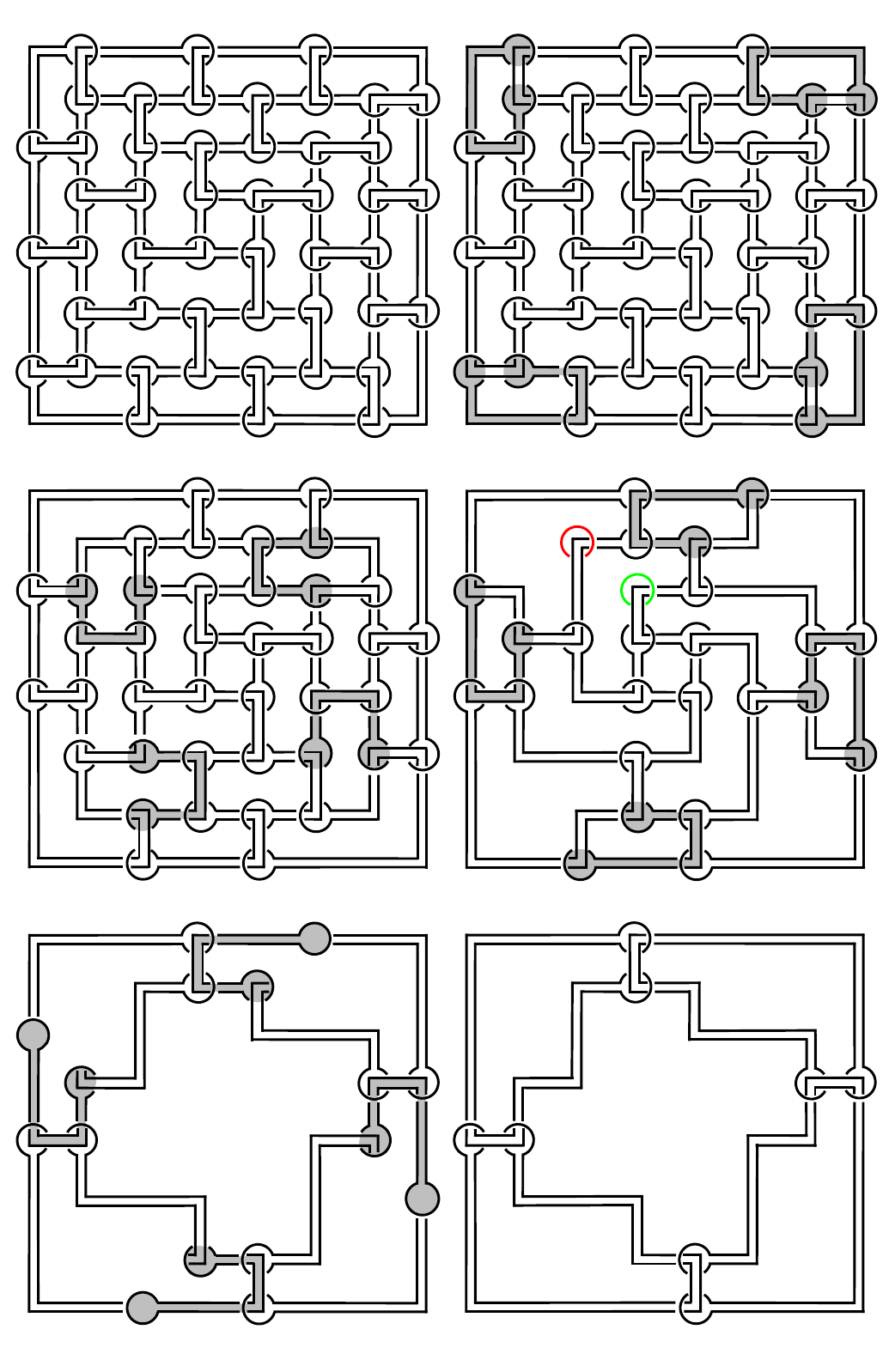}}
		\put(165.94,432.59){\fontsize{10.35}{12.42}\selectfont $a$}
		\put(177.38,432.59){\fontsize{10.35}{12.42}\selectfont $c$}
		\put(182.28,416.87){\fontsize{10.35}{12.42}\selectfont $b$}
		\put(191.26,445.45){\fontsize{10.35}{12.42}\selectfont $d$}
		\put(217.92,307.04){\fontsize{10.35}{12.42}\selectfont $(2)$}
		\put(33.38,297.02){\fontsize{10.35}{12.42}\selectfont $e$}
		\put(33.38,282.12){\fontsize{10.35}{12.42}\selectfont $f$}
		\put(56.24,282.12){\fontsize{10.35}{12.42}\selectfont $g$}
		\put(56.65,252.31){\fontsize{10.35}{12.42}\selectfont $h$}
		\put(174.61,284.18){\fontsize{10.35}{12.42}\selectfont $i$}
		\put(187.02,259.70){\fontsize{10.35}{12.42}\selectfont $j$}
		\put(28.29,135.19){\fontsize{10.35}{12.42}\selectfont $i$}
		\put(40.71,110.71){\fontsize{10.35}{12.42}\selectfont $j$}
		\put(70.91,307.04){\fontsize{10.35}{12.42}\selectfont $(1)$}
		\put(70.91,156.89){\fontsize{10.35}{12.42}\selectfont $(3)$}
		\put(217.92,156.89){\fontsize{10.35}{12.42}\selectfont $(4)$}
		\put(70.91,7.90){\fontsize{10.35}{12.42}\selectfont $(5)$}
		\put(217.92,7.90){\fontsize{10.35}{12.42}\selectfont $(6)$}
		\end{picture}%
		\else
		\setlength{\unitlength}{1bp}%
		\begin{picture}(299.53, 459.20)(0,0)
		\put(0,0){\includegraphics{Baas3sheaves}}
		\put(165.94,432.59){\fontsize{10.35}{12.42}\selectfont $a$}
		\put(177.38,432.59){\fontsize{10.35}{12.42}\selectfont $c$}
		\put(182.28,416.87){\fontsize{10.35}{12.42}\selectfont $b$}
		\put(191.26,445.45){\fontsize{10.35}{12.42}\selectfont $d$}
		\put(217.92,307.04){\fontsize{10.35}{12.42}\selectfont $(2)$}
		\put(33.38,297.02){\fontsize{10.35}{12.42}\selectfont $e$}
		\put(33.38,282.12){\fontsize{10.35}{12.42}\selectfont $f$}
		\put(56.24,282.12){\fontsize{10.35}{12.42}\selectfont $g$}
		\put(56.65,252.31){\fontsize{10.35}{12.42}\selectfont $h$}
		\put(174.61,284.18){\fontsize{10.35}{12.42}\selectfont $i$}
		\put(187.02,259.70){\fontsize{10.35}{12.42}\selectfont $j$}
		\put(28.29,135.19){\fontsize{10.35}{12.42}\selectfont $i$}
		\put(40.71,110.71){\fontsize{10.35}{12.42}\selectfont $j$}
		\put(70.91,307.04){\fontsize{10.35}{12.42}\selectfont $(1)$}
		\put(70.91,156.89){\fontsize{10.35}{12.42}\selectfont $(3)$}
		\put(217.92,156.89){\fontsize{10.35}{12.42}\selectfont $(4)$}
		\put(70.91,7.90){\fontsize{10.35}{12.42}\selectfont $(5)$}
		\put(217.92,7.90){\fontsize{10.35}{12.42}\selectfont $(6)$}
		\end{picture}%
		\fi
		\caption{\label{fig:Baas3sheaves}%
			Carpet$(1,3,4)$}
	\end{figure}

	To prove the Brunnian property of the link (1), we claim that the link (3) is Brunnian. Then take the gray test disks as shown in (2) and denote the test complex by $U_1$. There are 4 ears on each test disk and we just consider $a,b,c,d$ on test disk $D_1$. In test subcomplex $U_1-c$, we can connect the endpoints of $a$ by a unique arc on the disk. The union of the arc and $a$ has linking number with a component (in fact there are two) of link (1), thus $a$ is incompressible for $U_1$. Now suppose there is a quasi-tangle $Q\ni a$ compressible for $U_1$ with compressing disk $D_{Q}\subset \mathbb{R}^3- U_1$ and incredible circle $C\triangleq\partial D_{Q}\subset D_1^+$. If $Q\neq\{a\}$, then $U_1-a$ is split by the sphere $D_{Q}\cup C\cup D_C$, where $D_C$ is the disk on $D_1^+$ bounded by $C$ (To be exact, we shall push $D_C$ out of $\mathbf{D_1}$ a little). But the Brunnian link (3) can be viewed as a subset of $U_1-a$, which implies $U_1-a$ is not split and leads to a contradiction. $c$ is symmetric to $a$.
	Now consider $b,d$ and take test subcomplex $U'_1\triangleq U_1-a-c$. The trivial substitution of $b$ or $d$ for $U'_1$ is split. On the other hand, there is a subset of $U'_1$, the link (3) again, which is not split. So $U'_1$ is not split and by Sub-criterion \ref{cor:incompressibility}, $b,d$ are incompressible for $U'_1$. According to Sub-criterion \ref{prop:boundaryconnectedsum}, no quasi-tangle that contains $b$ or $c$ is compressible for $U_1$. So the link (1) is Brunnian.
	
	It remains to prove the Brunnian property of the link (3). Similarly we first claim the black part of (4), denoted $L_{(4)}$, is a Brunnian link. Then take the gray test disks in (3) and denote the test complex by $U_3$. Notice that $L_{(4)}$ can be viewed as a subset of $U_3$. We only consider the four unknotted ears $e,f,g,h$. In test subcomplex $U_3-h$, there is a unique arc connecting the endpoints of $f$ on the disk. The union of the arc and $f$ is isotopic to the red circle in the complement of $L_{(4)}$. According to Proposition \ref{prop:2pts} it can not bound a disk in the complement of  $L_{(4)}$, thus $f$ is incompressible for $U_3$. Considering $U_3-f$ and the green circle similarly, we see $h$ is also incompressible. For ear $e$ and $g$, take test subcomplex $U'_3\triangleq U_3-f-h$. The trivial substitution of $e$ or $h$ for $U'_3$ is split, while $L_{(4)}\subset U'_3$ implies $U'_3$ is not split. So $e,h$ are incompressible by Sub-criterion \ref{cor:incompressibility}. Also, quasi-tangles are all incompressible for $U_3$.
	
	Finally we prove $L_{(4)}$ is Brunnian by taking the gray test disks in (4) and test subcomplex $U_5$. We only need to show the ears $i,j$ are incompressible. The trivial substitution of $i$ or $j$ for $U_5$ is split. However, the link (6), which is a subset of $U_5$ and in fact a Brunn's chain of 4 components (see Example \ref{example:Milnor}), implies that $U_5$ is not split. Again by Sub-criterion \ref{cor:incompressibility} we finish the proof.
\end{example}

The Brunnian property of general Carpet$(m,n,p)$ and Brunnian solid (Fig. 16 in section 3.3 in \cite{BCS}) can be verified in a similar manner.

We now see Circle-method has three distinct features. First, it is simpler than using link invariants in that it is worked by hand. This advantage grows with the size of links. Second, it is highly efficient. For instance, during the proofs in Example \ref{example:Baas2rows} and \ref{example:Baas3sheaves}, we also show the Brunnian property of some new links as by-products. Third, it is very flexible. If we make some ``open end'' of a component go around the adjacent ``interior bend'' from the other side, i.e., conduct 4 crossing changes, or twist some double strands, the new link is still Brunnian, as we would only need a little change in the proof while retaining the whole framework. This leads to further construction of new Brunnian links.

\section{Constructions of New Brunnian links}\label{sect:construction}

It should be pointed out that the constructions below will give only an indication of its potential for constructing Brunnian links. For instance, Fig. \ref{fig:jade-pendant} illustrates a grand Brunnian link by twining Carpet$(2,3,4)$ and ``adding" copies of the true lover's knot and links $7_2^2,8_1^4$ in Rolfsen's list \cite{D.Rolfsen}. One can also try to modify other known Brunnian links to create new ones.

\begin{figure}[htbp]
	\centering
	\includegraphics[height=11cm]{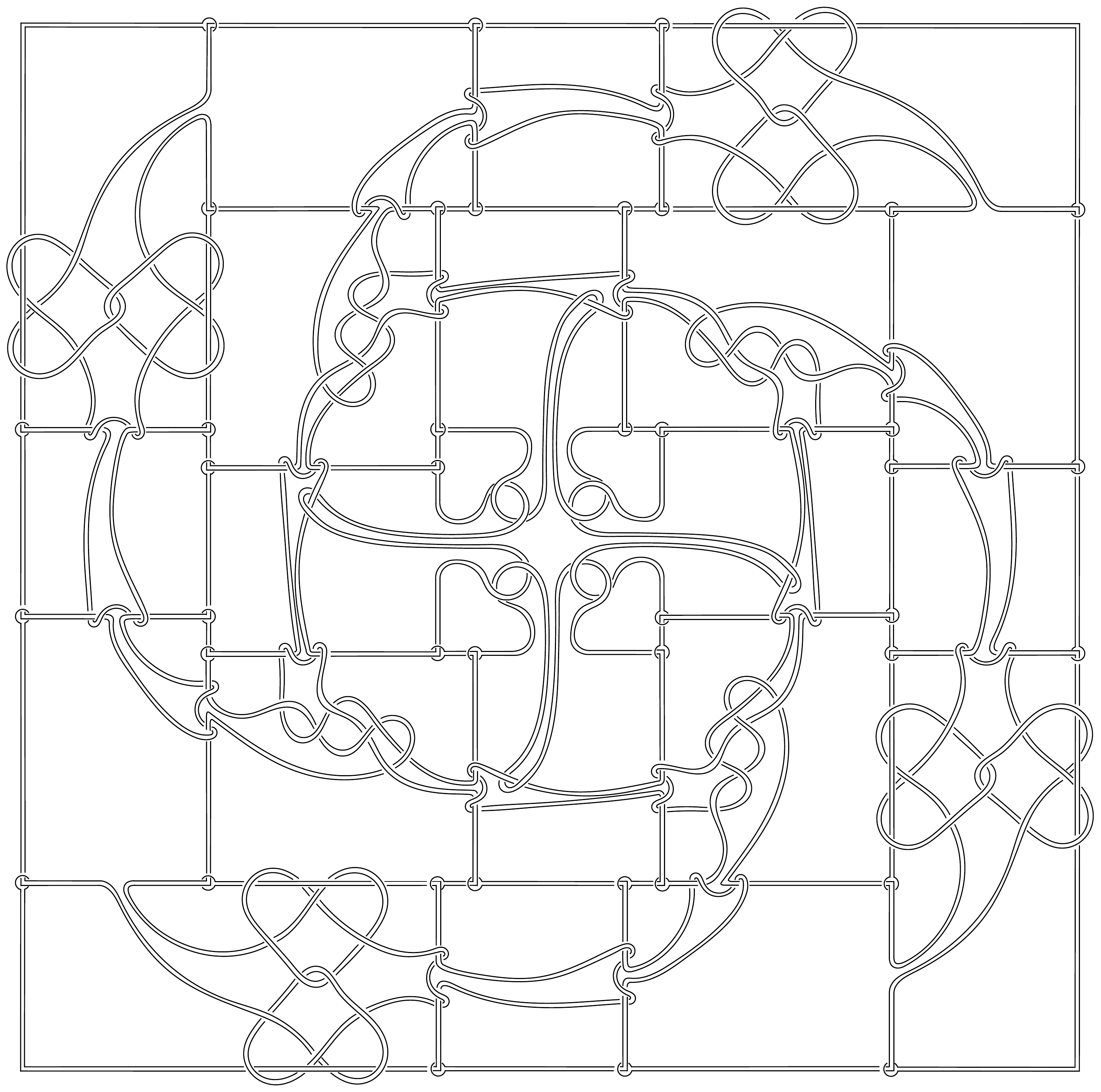}
	\caption{Jade pendant}
	\label{fig:jade-pendant}
\end{figure}

Based on each link Carpet$(m,n,p)$, we now construct several infinite families of new Brunnian links. So there are infinitely many infinite families as $m,n,p$ vary. The proof for the original link, without any change in text, shows they are Brunnian. The next three families are based on Carpet$(2,3,4)$.

\begin{enumerate}
	\item \textbf{Snake.} Choose a segment of double strands on one component, and twine it around other components as in Fig. \ref{fig:snake}. As shown in the dashed red box, we can twine it arbitrarily many times once twining happens, giving infinitely many Brannian links.
	\item \textbf{Fountains.} Twining a segment of double strands on each component around an adjacent component arbitrarily many times (as in the dashed red box) towards specific directions as in Fig. \ref{fig:fountains} gives a new sequence of Brannian links.
	\item \textbf{Cirrus.} As shown in Fig. \ref{fig:cirrus}, every component twines itself in various ways. Besides, we ``add" a tangle in the middle, which may be chosen quite arbitrarily.
\end{enumerate}

\begin{figure}[htbp]
	\centering
	\begin{minipage}[t]{0.45\textwidth}
		\centering
		\includegraphics[height=6cm]{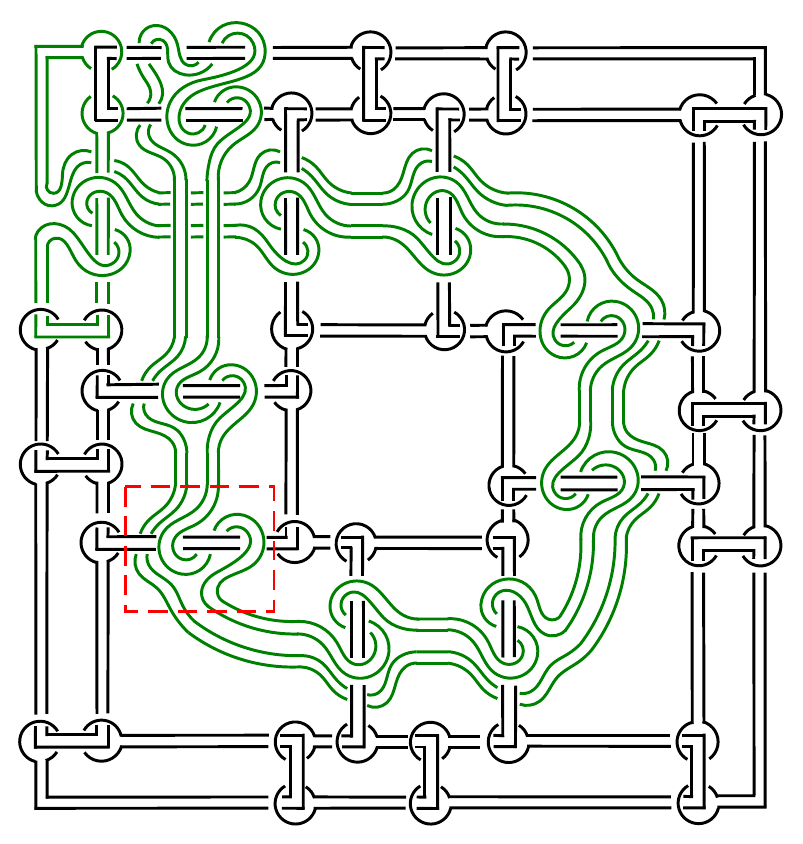}
		\caption{Snake}
		\label{fig:snake}
	\end{minipage}
	\begin{minipage}[t]{0.45\textwidth}
		\centering
		\includegraphics[height=6cm]{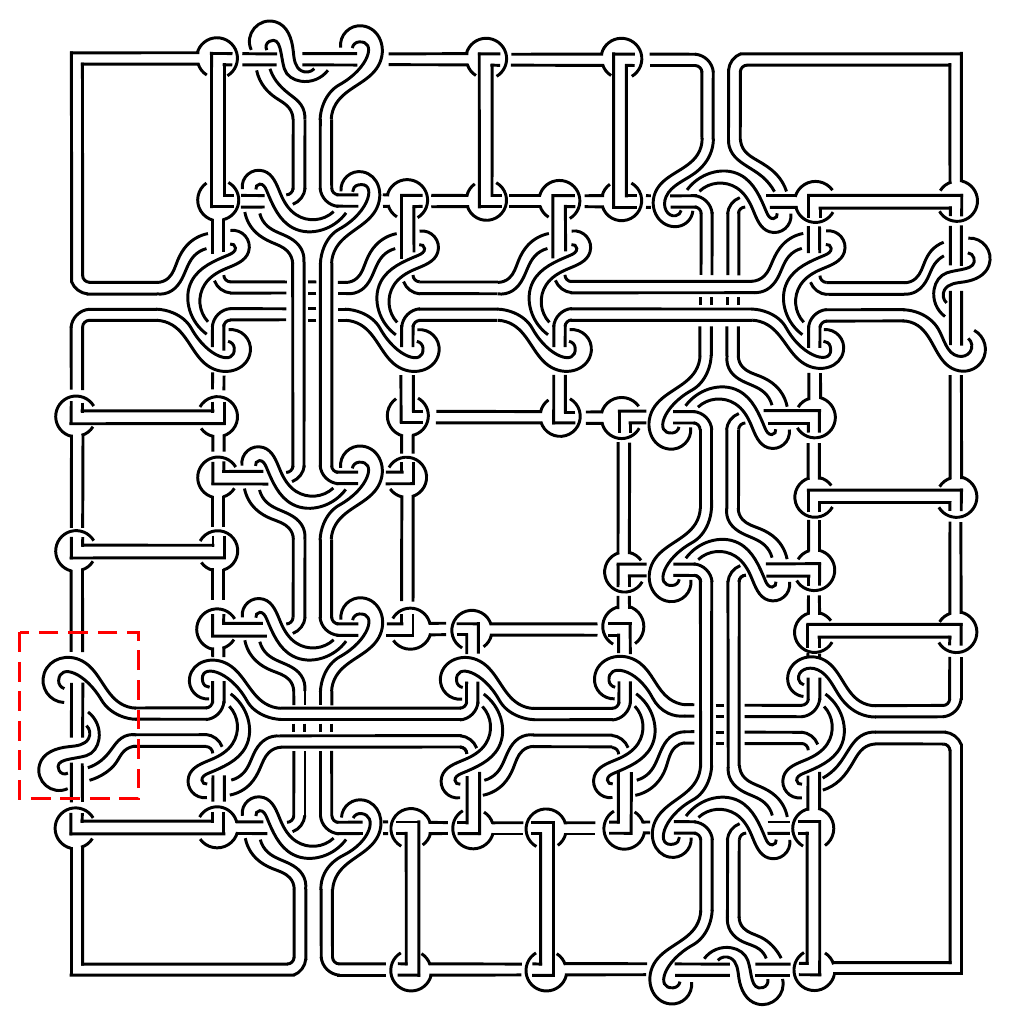}
		\caption{Fountains}
		\label{fig:fountains}
	\end{minipage}
\end{figure}

\begin{figure}[htbp]
	\centering
	\includegraphics[height=8cm]{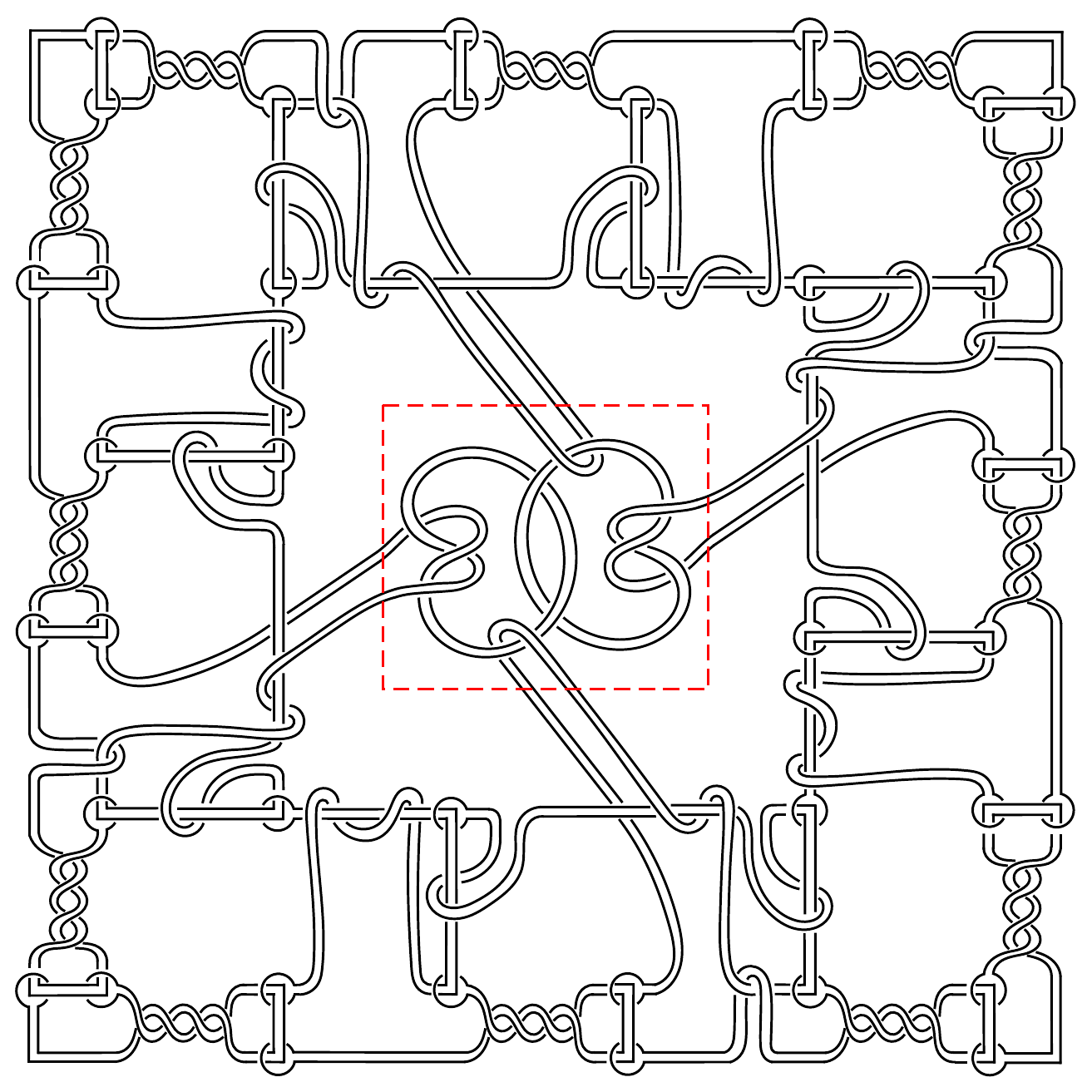}
	\caption{Cirrus}
	\label{fig:cirrus}
\end{figure}

Recall that the proofs in Example \ref{example:Baas2rows} and \ref{example:Baas3sheaves} bring new Brunnian links. We can utilize them as well to create new links. We give a series here.

\textbf{Wheel.} Fig. \ref{fig:otherclass} illustrates an infinite family of new Brunnian links by twining the components of $L_{(4)}$ in Fig. \ref{fig:Baas3sheaves}.

\begin{figure}[htbp]
	\centering
	\includegraphics[height=8cm]{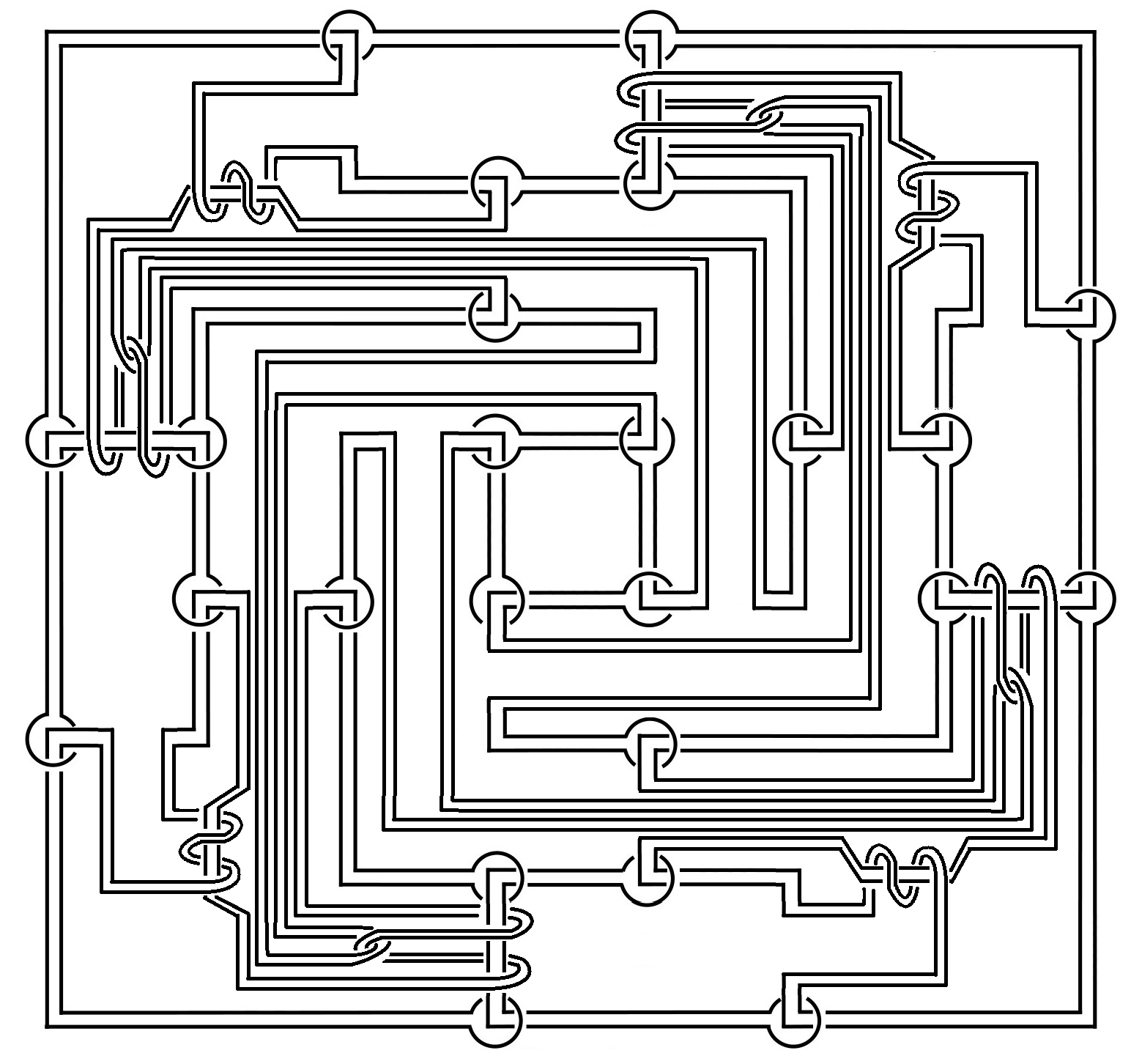}
	\caption{Wheel}
	\label{fig:otherclass}
\end{figure}

\section{Concluding Remarks}

We have established general approaches to detecting Brunnian property of links, and used them to create new Brunnian links in bulk. In fact, Arc-method and Circle-method apply for more general problems.

\begin{figure}[htbp]
	\centering
	\begin{minipage}[t]{0.3\textwidth}
		\centering
		\includegraphics[height=4cm]{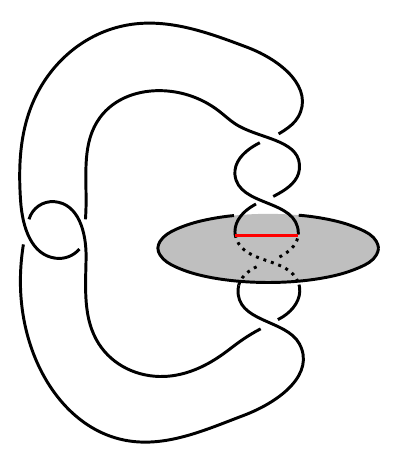}
		\caption{}
		\label{fig:notBrunnian}
	\end{minipage}
	\begin{minipage}[t]{0.3\textwidth}
		\centering
		\includegraphics[height=4cm]{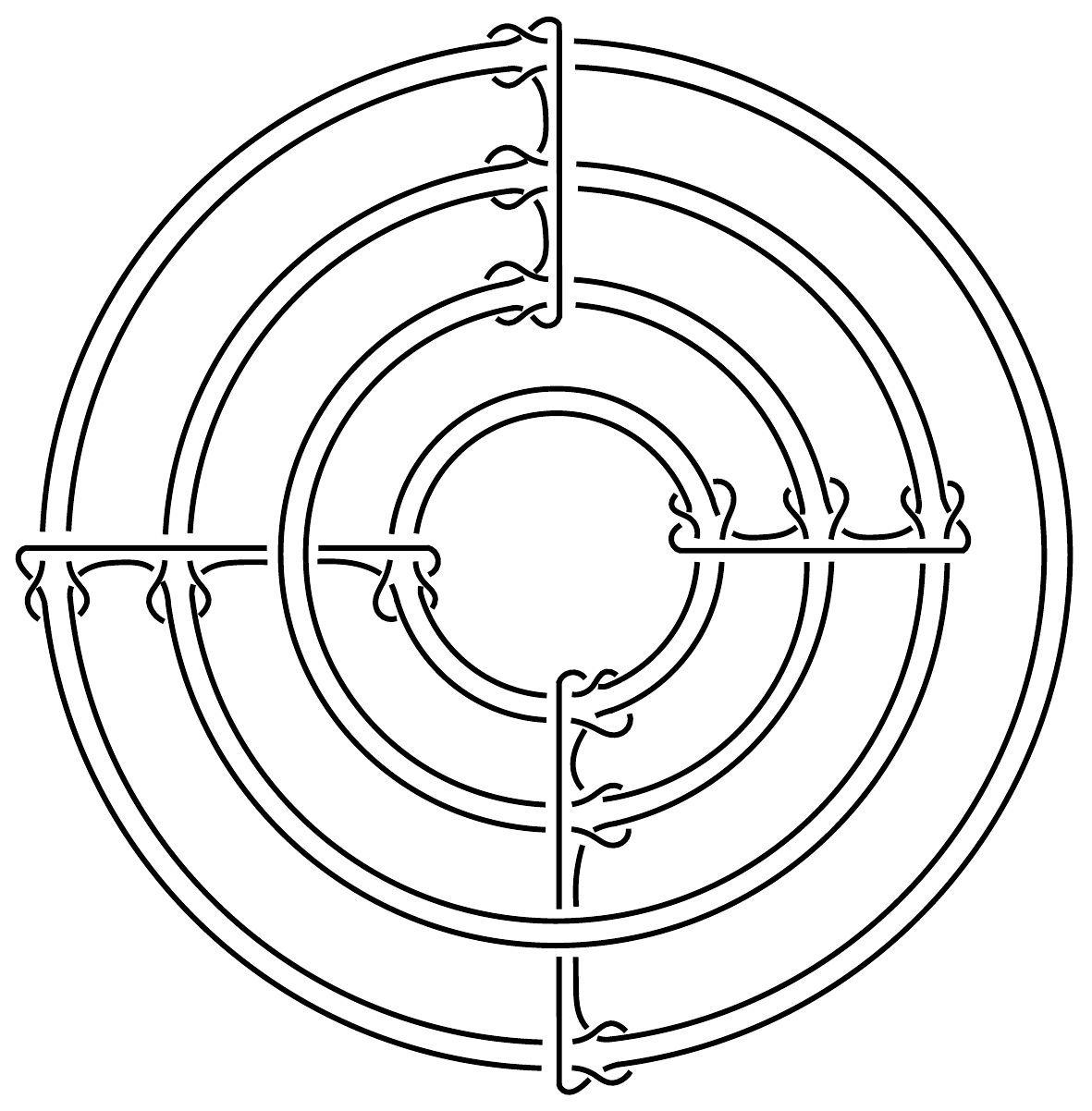}
		\caption{}
		\label{fig:brunner}
	\end{minipage}
	\begin{minipage}[t]{0.35\textwidth}
		\centering
		\includegraphics[height=4cm]{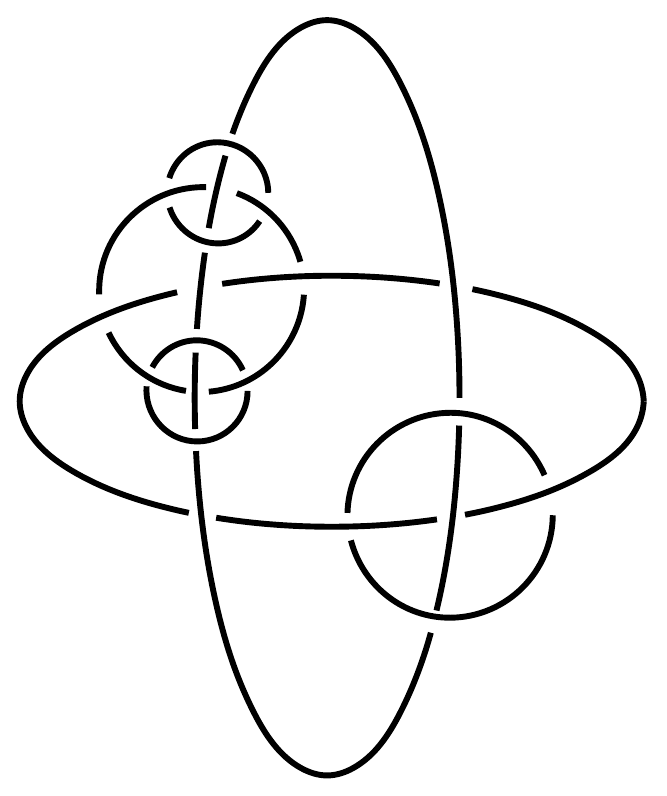}
		\caption{}
		\label{fig:fractalborromean}
	\end{minipage}
\end{figure}

1. Our method can be extended directly to show a link is nontrivial, if it has some unknot components so that we can take test disks, without considering whether each component is unknotted. For example, the test disk in Fig. \ref{fig:notBrunnian} shows fast this link is nontrivial. Especially, our methods work to detect generalized Brunnian properties (c.f. \cite{HDb,J,TKa,TK}). For instance, Arc-method shows Fig. \ref{fig:brunner} is nontrivial, which was originally proved by Alexander invariants\cite{HDb}.

2. Given a link, considering its Brunnian sublinks helps to show it is not split. Notice that Brunnian links are unsplit and if two unsplit links share a common component, then the union of them is not split either. Thus, for instance, Fig. \ref{fig:fractalborromean} (from \cite{J}) is not split.

3. In a 3-dimensional manifold $M^3$, an \emph{unlink} is defined by the boundary of disjoint union of embedded disks. So we can define Brunnian links in $M^3$ as well. It can be verified that Arc-method still holds. As a sphere in $M^3$ (even if irreducible) generally does not bound a ball on specific side, to generalize Circle-method into $M^3$, we need change the definition of ``compressible'' for quasi-tangles with care. This modification is relatively easy when $M^3$ is a submanifold of $\mathbb{S}^3$.

\vspace{12pt}

\bibliographystyle{amsalpha}

\clearpage

\section{Appendix}

\begin{figure}[htbp]
	\centering
	\ifpdf
	\setlength{\unitlength}{1bp}%
	\begin{picture}(133.66, 124.26)(0,0)
	\put(0,0){\includegraphics{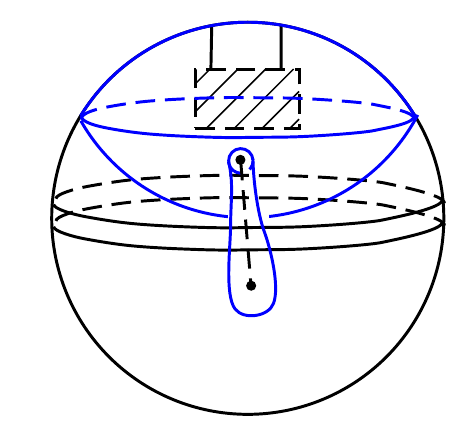}}
	\put(69.20,36.98){\fontsize{6.98}{8.38}\selectfont $\infty$}
	\put(5.67,72.70){\fontsize{6.98}{8.38}\selectfont $D_1^+$}
	\put(5.67,47.35){\fontsize{6.98}{8.38}\selectfont $D_1^-$}
	\put(35.19,88.74){\fontsize{6.98}{8.38}\selectfont $C$}
	\put(68.65,108.00){\fontsize{6.98}{8.38}\selectfont $Q$}
	\put(103.21,78.80){\fontsize{6.98}{8.38}\selectfont $D$}
	\put(108.35,107.35){\fontsize{6.98}{8.38}\selectfont $D_0$}
	\put(78.81,31.55){\fontsize{6.98}{8.38}\selectfont \textcolor[rgb]{0, 0, 1}{$B^*$}}
	\end{picture}%
	\else
	\setlength{\unitlength}{1bp}%
	\begin{picture}(133.66, 124.26)(0,0)
	\put(0,0){\includegraphics{tangleball}}
	\put(69.20,36.98){\fontsize{6.98}{8.38}\selectfont $\infty$}
	\put(5.67,72.70){\fontsize{6.98}{8.38}\selectfont $D_1^+$}
	\put(5.67,47.35){\fontsize{6.98}{8.38}\selectfont $D_1^-$}
	\put(35.19,88.74){\fontsize{6.98}{8.38}\selectfont $C$}
	\put(68.65,108.00){\fontsize{6.98}{8.38}\selectfont $Q$}
	\put(103.21,78.80){\fontsize{6.98}{8.38}\selectfont $D$}
	\put(108.35,107.35){\fontsize{6.98}{8.38}\selectfont $D_0$}
	\put(78.81,31.55){\fontsize{6.98}{8.38}\selectfont \textcolor[rgb]{0, 0, 1}{$B^*$}}
	\end{picture}%
	\fi
	\caption{\label{fig:tangleball}%
		$\overline{B_1}=\mathbb{S}^3-{\rm Int}(\mathbf{D}_1)$}
\end{figure}

\begin{proof}[Proof of Lemma \ref{lem:tangleball}]
	Consider it in $\mathbb{S}^3 = \mathbb{R}^3 \cup \{ \infty \}$, then $B_1\triangleq\mathbb{S}^3 -\mathbf{D_1}$ and $B_2\triangleq B_1-B$ are both 3-balls, see Fig. \ref{fig:tangleball}. Connect $\infty$ with a point $p\in D$ by an arc in $B_2$, and let $V$ be its regular neighborhood in $B_1$ such that $V\cap Q=\emptyset$ and $V\cap D$ is a regular neighborhood of $p$ on $D$. Let $B^*=B\cup V$. Then there is a homeomorphism $f_0:(B \cup \mathbf{D_1}, U'_Q )\to (B^* \cup \mathbf{D_1}, U'_Q )$ which is identity on $U'_Q$ and supported by a regular neighborhood of $V$. Moreover, the disk $\partial B^*\cap B_2$ is parallel to $\partial \mathbf{D}_1-D_0$ hence unique up to an isotopy in $B_1$ which fixes $Q$.
	For $\tilde{B}$ there is a corresponding 3-ball $\tilde{B}^*$. Extending the isotopy from $\partial B^*\cap B_2$ to $\partial \tilde{B}^*\cap B_2$, we obtain a homeomorphism $g:(B^* \cup \mathbf{D_1}, U'_Q )\to (\tilde{B}^* \cup \mathbf{D_1}, U'_Q )$ which is identity on $U'_Q$. $f$ can be chosen as the composition $f_0,g$ and the inverse of $f_0':(\tilde{B} \cup \mathbf{D_1}, U'_Q )\to (\tilde{B}^* \cup \mathbf{D_1}, U'_Q )$ which is defined similarly to $f_0$.
\end{proof}

\end{document}